\newtheorem{theorem}{Theorem}[section]
\newtheorem{lemma}[theorem]{Lemma}
\newtheorem{proposition}[theorem]{Proposition}
\newtheorem{corollary}[theorem]{Corollary}
\theoremstyle{definition}
\newtheorem{definition}[theorem]{Definition}
\theoremstyle{remark}
\newtheorem{remark}[theorem]{Remark}
\numberwithin{equation}{section}
\newcommand{\tr}{\textup{tr}}
\newcommand{\Rm}{\textup{Rm}}
\newcommand{\Ric}{\textup{Ric}}
\newcommand{\Div}{\textup{div}}
\newcommand{\FS}{\textup{FS}}
\newcommand{\Tr}{\textup{Tr}}
\newcommand{\loc}{\textup{loc}}
\newcommand{\C}{\mathbb{C}}
\newcommand{\Z}{\mathbb{Z}}
\newcommand{\CP}{\mathbb{CP}}
\newcommand{\sL}{\mathscr{L}}
\newcommand{\dd}[1]{\frac{\partial}{\partial #1}}
\newcommand{\ddbar}{\partial \bar{\partial}}
\begin{document}
\title{Rotational Symmetry of Conical K\"ahler-Ricci Solitons}
\author{Otis Chodosh}
\address{Department of Mathematics, Stanford University}
\email{ochodosh@math.stanford.edu}

\author[Frederick Fong]{Frederick Tsz-Ho Fong}
\address{Department of Mathematics, Brown University}
\email{fong@math.brown.edu}

\subjclass[2010]{Primary 53C44; Secondary 35C08}
\date{\today}
\thanks{The first named author was supported in part by a National Science Foundation Graduate Research Fellowship DGE-1147470. He would like to thank Simon Brendle for many discussions concerning his soliton uniqueness results, as well as for his support and encouragement. The second named author would like to thank Richard Schoen, Simon Brendle and Yanir Rubinstein who aroused his interest concerning topics related to this paper when he was a graduate student at Stanford. He would also like to thank Nicos Kapouleas for discussions which motivated him to consider this problem.}
\begin{abstract}
We show that expanding K\"ahler-Ricci solitons which have positive holomorphic bisectional curvature and are $C^{2}$-asymptotic to K\"ahler cones at infinity must be the $U(n)$-rotationally symmetric expanding solitons constructed by Cao. 
\end{abstract}
\maketitle
\section{Introduction}

An expanding gradient Ricci soliton is a Riemannian manifold $(M,g)$ which satisfies $2\Ric +  g = 2 D^2 f$ for some smooth function $f$, called the soliton potential. Solitons provide local models for singularity formulation under the Ricci flow, and play a central role in the Hamilton--Perelman theory of Ricci flow. The fundamental object of study in this paper are expanding K\"ahler-Ricci solitons, which are expanding Ricci solitons that are also K\"ahler metrics. These solitons may be expressed in holomorphic coordinates as
\begin{equation}\label{eq:KRS}
2R_{i\bar{j}} + g_{i\bar{j}} = 2D_i D_{\bar{j}} f, \qquad D_i D_j f = 0.
\end{equation}
In particular, $\nabla f$ must be a real holomorphic vector field on $M$.

In addition to providing singularity models for the Ricci flow, solitons are also of interest for a variety of other reasons. For example, the soliton equation is a natural generalization of the Einstein equation, which has been the subject of a great deal of study in both the Riemannian and K\"ahler settings. Additionally, expanding solitons are related to the Harnack inequality for the Ricci flow, cf.\ \cite{Cao:KRFsolitons} for the relationship between the Harnack inequality for K\"ahler-Ricci flows with nonnegative holomorphic bisectional curvature and expanding K\"ahler-Ricci solitons. Also, the K\"ahler-Ricci solitons constructed in e.g.\ \cite{FeldmanIlmanenKnopf} provide interesting examples in which it is possible to continue K\"ahler-Ricci flow through a singular time. As such, they provide a model case to study the (yet unresolved) problem of constructing a good theory of weak solution to the Ricci flow, similar the theory for other geometric heat equations, e.g.\ Brakke flow for mean curvature flow. The interested reader might consult \cite[Theorem 1.6]{FeldmanIlmanenKnopf} and the introduction in \cite{FutakiWang:KRFsolitons}.

The (steady and expanding) K\"ahler-Ricci solitons constructed on $\C^{n}$ by Cao in \cite{Cao:ExistenceofKRFsol,Cao:KRFsolitons} and (shrinking and expanding) solitons on complex line bundles over $\CP^{n}$ constructed by Feldman--Ilmanen--Knopf in \cite{FeldmanIlmanenKnopf} are rotationally symmetric. The expanding solitons constructed in these works may be seen to be solutions of Ricci flow with initial conditions (in the Gromov-Hausdorff sense) a K\"ahler cone on $\C^{n}\backslash\{0\}$ and $(\C^{n}\backslash\{0\})/\Z_{k}$ respectively, see \cite{FeldmanIlmanenKnopf} for further discussion concerning this point. These constructions have been generalized in several works, including \cite{DancerWang:CohomogeneityOne} and \cite{FutakiWang:KRFsolitons} to produce K\"ahler-Ricci solitons on holomorphic bundles over K\"ahler-Einstein manifolds via various cohomogeneity-one ansatzes. The interested reader may refer to the survey \cite{Cao:RecentSoliton} concerning recent progress in the study of Ricci solitons.

In this paper, we study expanding K\"ahler-Ricci solitons with positive bisectional curvature, under the assumption that they are asymptotically conic at infinity. Complete K\"ahler manifolds with positive (and non-negative) holomorphic bisectional curvature have been extensively studied. There is a well known conjecture, known as the Yau's Uniformization Conjecture, which says that any complete non-compact K\"ahler manifold with positive holomorphic bisectional curvature must be biholomorphic to $\C^n$. Many interesting results verify that this conjecture is true under certain geometric and analytic assumptions (see the survey paper \cite{ChauTam:YauConjSurvey} and the references therein). In particular, it was shown by Chau--Tam in \cite[Corollary 1.1]{ChauTam:OnComplexStr} (building on the work of Ni in \cite{Ni:AncientSol}) that any complete noncompact K\"ahler manifold with nonnegative and bounded holomorphic bisectional curvature and of maximal volume growth must be biholomorphic to $\C^n$. The maximal volume growth hypothesis clearly holds if the metric has a $U(n)$-rotationally symmetric tangent cone at infinity (in the Gromov--Hausdorff sense) and hence such a manifold must be biholomorphic to $\C^{n}$.

In order to state the precise conical asymptotic assumptions on the expanding K\"ahler-Ricci solitons we are considering here, we first define the $U(n)$-rotationally symmetric \emph{K\"ahler cone metrics} by $g_\alpha := 2 \textup{Re}\left(\ddbar |z|^{2\alpha}\right)$ (we discuss these metrics further in Section \ref{sect:kahler-cone}). We also define $\rho_{\lambda}:\C^{n}\to \C^{n}$ to be the \emph{dilation by $\lambda>0$ map}, i.e.\ $\rho_{\lambda}(z) = \lambda z$. Given these definitions, we may now define the asymptotic assumptions which we will consider in this paper:

\begin{definition}\label{defi:asym-cone}
A K\"ahler manifold $(M^{2n},g)$ is \emph{asymptotically conical} with cone angle $2\pi\alpha\in (0,2\pi)$ if there is a biholomorphism $F: \C^{n}\backslash K_{1} \to M^{2n}\backslash K_{2}$ (for $K_{1},K_{2}$ compact sets) so that 
\begin{equation*}
\lim_{\lambda\to\infty}\lambda^{-2\alpha} \rho_{\lambda}^{*}(F^{*}g) = g_{\alpha}
\end{equation*} 
in $C_{\loc}^{2}(\C^{n}\backslash K_{1},g_{\alpha})$.
\end{definition}

Our main result is:
\begin{theorem}\label{thm:main}
Suppose, for $n\geq 2$, that $(M^{2n}, g, f)$ is an expanding gradient K\"ahler-Ricci soliton with positive holomorphic bisectional curvature which is asymptotically conical in the sense of Definition \ref{defi:asym-cone}. Then, $(M,g,f)$ is isometric to one of the $U(n)$-rotationally symmetric expanding gradient solitons on $\C^{n}$, as constructed by Cao.
\end{theorem}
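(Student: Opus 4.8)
The plan is to reduce the problem to establishing that $(M,g)$ carries an isometric $U(n)$-action, after which a cohomogeneity-one reduction recovers Cao's family. First I would invoke the uniformization result quoted above: since $(M,g)$ is asymptotically conical it has maximal volume growth, so by Chau--Tam together with positivity of the bisectional curvature, $M$ is biholomorphic to $\C^{n}$. Fixing such a biholomorphism, I transport all structures to $\C^{n}$ with its standard complex coordinates $z^{1},\dots,z^{n}$ and holomorphic frame $\dd{z^{i}}$. Since $\nabla f$ is real holomorphic, its $(1,0)$-part is a genuine holomorphic vector field, and the standard fact that $J\nabla f$ is Killing on any K\"ahler-Ricci soliton already supplies a one-parameter group of isometries. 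Using positivity of the bisectional curvature I would check that $f$ is a proper exhaustion with a single critical point, so that its level sets are spheres $S^{2n-1}$ and the gradient flow identifies $M\setminus\{p\}$ with $(0,\infty)\times S^{2n-1}$.

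Next I would carry out the asymptotic matching. From the $C^{2}_{\loc}$ convergence $\lambda^{-2\alpha}\rho_{\lambda}^{*}g\to g_{\alpha}$ and the soliton equation \eqref{eq:KRS}, I would extract the leading behaviour of $f$ and of $\nabla f$ at infinity, showing that $\nabla f$ is asymptotic to a multiple of the Euler (dilation) field $\mathrm{Re}\sum_{i}z^{i}\dd{z^{i}}$ and hence that $J\nabla f$ is asymptotic to the Hopf rotation generating the diagonal $U(1)\subset U(n)$. The purpose of this step is to produce, for the candidate rotational symmetries, quantitative decay at infinity that can feed a global rigidity argument.

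The heart of the argument, and the step I expect to be the main obstacle, is enlarging the isometry group from this $U(1)$ to the full $U(n)$. For a skew-Hermitian matrix $A$ let $Y_{A}=\sum_{i,j}A^{i}{}_{j}z^{j}\dd{z^{i}}$ be the corresponding linear holomorphic field and $V_{A}=\mathrm{Re}\,Y_{A}$ its real part; since $V_{A}$ generates biholomorphisms it preserves $J$, so it is Killing precisely when the symmetric tensor $h_{A}:=\sL_{V_{A}}g$ vanishes. Differentiating the soliton equation along the holomorphic field $V_{A}$, and using that $\sL_{V_{A}}$ interacts well with the Ricci form, I would derive a homogeneous linear elliptic equation for $h_{A}$ of drift-Lichnerowicz type, schematically $\Delta_{f}h_{A}+\Rm * h_{A}=0$ for the Bakry--\'Emery Laplacian $\Delta_{f}=\Delta-\nabla_{\nabla f}$; equivalently, after passing to the holomorphy potential $u_{A}$ of $Y_{A}$, a scalar drift-eigenvalue equation. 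The asymptotic matching of the previous step shows $h_{A}\to 0$ at infinity, and the outward drift of the expander together with the sign afforded by positive bisectional curvature should allow a maximum-principle (Liouville) argument for $\Delta_{f}$ to conclude $h_{A}\equiv 0$. The delicate points here are obtaining the precise elliptic equation with a favourably signed zeroth-order term and verifying that the decay of $h_{A}$ is strong enough to rule out a nontrivial bounded solution. Granting this for every $A$, the fields $V_{A}$ integrate to an isometric and holomorphic $U(n)$-action.

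Finally, with the standard $U(n)$ acting by isometries, $(M,g,f)$ is cohomogeneity one with spherical orbits, and \eqref{eq:KRS} reduces to the system of ODEs solved by Cao; the complete solution with the prescribed conical asymptotics and positive curvature is unique up to scaling, which identifies $(M,g,f)$ with one of Cao's $U(n)$-rotationally symmetric expanders and completes the proof.
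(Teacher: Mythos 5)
Your overall architecture — push the linear $U(n)$-rotation fields onto $M$ via a biholomorphism with $\C^n$, form $h_A=\sL_{V_A}g$, derive a drift-Lichnerowicz equation, and kill $h_A$ by a maximum principle using positive bisectional curvature and decay at infinity — is the same as the paper's. But there is a genuine gap at the step you yourself flag as delicate: you cannot obtain the \emph{exact homogeneous} equation for $h_A$ by ``differentiating the soliton equation along $V_A$.'' Lie-differentiating $2\Ric+g=2D^2f$ along a non-Killing field produces, via the linearization of Ricci, gauge terms of the form $\delta^*\bigl(\Div h_A - \tfrac{1}{2}\nabla\tr h_A\bigr)$ together with terms involving the unknown scalar $V_A f$ and $\sL_{V_A}\Gamma\cdot\nabla f$; these do not assemble into $\Delta_L h_A+\sL_X h_A-h_A$ unless $V_A$ already satisfies the auxiliary vector-field equation $\Delta V_A+D_XV_A-\tfrac12 V_A=0$. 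The conical asymptotics only give this equation \emph{approximately}, with an $o(r)$ error, and an inhomogeneous Lichnerowicz equation with an error that is merely $o(r)$ (and gets differentiated) cannot feed the barrier argument. So your proposal is missing the mechanism that upgrades approximate identities to exact ones.

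The paper supplies exactly this mechanism with a Liouville-type lemma, and it is the step your sketch has no substitute for: writing $Q=\Delta V_A+D_XV_A-\tfrac12 V_A$, a computation using real holomorphicity of $V_A$ and $X$ shows $Q^{1,0}=[V_A^{1,0},X^{1,0}]$, whose coefficients in the global holomorphic coordinates are \emph{holomorphic functions} on $\C^n$; the asymptotics (after converting between $g_\alpha$-norms and Euclidean norms, where the cone metric's eigenvalues $\sim|z|^{2(\alpha-1)}$ matter) give sublinear growth, so the classical Liouville theorem forces these coefficients to be constants, and they vanish because $V_A$ and $X$ have a common zero. For that last point one must first center the coordinate system so that the linear fields vanish at the critical point of $f$, and — a related issue in your write-up — one must use the \emph{asymptotically conical} biholomorphism of Definition \ref{defi:asym-cone} extended inward by Hartogs' theorem, not an arbitrary Chau--Tam biholomorphism: for an arbitrary identification of $M$ with $\C^n$ there is no reason the linear fields $V_A$ are approximate Killing fields or that $h_A\to 0$ at infinity. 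Once $Q\equiv 0$ is known, the exact Lichnerowicz equation follows (Proposition \ref{prop:lichPDE}) and your maximum-principle step is then essentially the paper's barrier argument with $2\Ric+g$; but without the Liouville step the proof does not close.
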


We remark that the expanding K\"ahler-Ricci solitons constructed by Cao in \cite{Cao:KRFsolitons}, are asymptotically conical as solitons and have positive bisectional curvature. Furthermore, they are the unique (up to isometry) $U(n)$-invariant expanding solitons asymptotic to a cone with a fixed cone angle (see \cite[Theorem 1.3]{FeldmanIlmanenKnopf}). We also remark that in (complex) dimension one, gradient Ricci solitons are completely classified: they are all rotationally symmetric as seen by considering the Killing vector $J\nabla f$ (where $J$ is some compatible complex structure). For example, there is exactly one expanding solitons with positive curvature, as discussed in \cite{Kotschwar:rotsym2dexpand}. See also the recent work \cite{BernsteinMettler:2dsol} and the references contained within for a complete classification.

Some context for the above result may be found in the recent works of Schulze--Simon and Cabezas-Rivas--Wilking \cite{SchulzeSimon:Expand-Cones,Cabezas-Rivas:RFviaCGexhaust}. In particular, by \cite[Remark 7.3]{Cabezas-Rivas:RFviaCGexhaust}, given a manifold with nonnegative complex sectional curvature (in particular, this holds for a manifold with nonnegative holomorphic bisectional curvature), one may construct a smooth Ricci flow with initial conditions (in the Gromov--Hausdorff sense) the tangent cone at infinity of this manifold. There is a well established theory of uniqueness for Ricci flow with bounded curvature when the initial metric a complete metric of bounded curvature (cf.\ \cite{ChenZhu:RFunique} and also \cite{Chen:RFstrongUnique} for uniqueness for three manifolds without the bounded curvature condition on the flow, but with the assumption that the initial metric has nonnegative, bounded Ricci curvature). However, very little is known (except in dimension two, see \cite{GiesenTopping:surfIncompRF}) concerning uniqueness of the Ricci flow with initial metrics which are not smooth/not complete, as in the examples constructed by Schulze--Simon and Cabezas-Rivas--Wilking. 

Our argument is inspired by the recent works of Brendle \cite{Brendle:3DSolitonUniqueness,Brendle:HighDimSoliton}, establishing $O(n)$-symmetry of certain steady solitons. In particular, Brendle was able to resolve a question posed by Perelman in his first paper \cite{Perelman:EntropyFunct}, concerning rotational symmetry of steady solitons in three dimensions:
\begin{theorem}[S. Brendle {\cite{Brendle:3DSolitonUniqueness}}] If $(M^{3},g,f)$ is a $\kappa$-noncollapsed, steady, gradient Ricci soliton, then it must be the $O(3)$-rotationally symmetric Bryant soliton constructed in \cite{Bryant:Solitons}. 
\end{theorem}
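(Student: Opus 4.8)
The plan is to follow the strategy of propagating rotational symmetry inward from spatial infinity, combining a sharp asymptotic analysis with an elliptic correction scheme for approximate Killing fields. First I would record the basic steady-soliton identities $R = \Delta f$ and $R + \abs{\nabla f}^{2} = \text{const}$, and use Hamilton's strong maximum principle together with the $\kappa$-noncollapsing hypothesis to conclude that a non-flat such soliton has strictly positive sectional curvature. Indeed, $\Ric = \nabla^2 f > 0$ forces $f$ to be strictly convex with a unique critical point (the tip), and the only way positivity could fail is a splitting $\Sigma^{2}\times\R$; but then $\Sigma^{2}$ is the cigar, whose cylindrical ends make $\Sigma^{2}\times\R$ collapsed, contradicting $\kappa$-noncollapsing. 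One then checks the soliton opens like a paraboloid: $R$ decays linearly in the distance to the tip, the level sets $\{f=c\}$ are topological two-spheres, and $\abs{\nabla f}$ tends to a positive constant.

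The heart of the argument is an asymptotic analysis showing that $(M,g)$ is, to high order, rotationally symmetric near infinity. I would rescale the cross-sections $\{f=c\}$ and, treating the soliton equation as a system of ODEs along the integral curves of $\nabla f/\abs{\nabla f}^{2}$, show that these cross-sections converge to round two-spheres with explicit decay rates matching the Bryant soliton. This furnishes, away from a compact set, three pointwise-independent vector fields $U^{(1)},U^{(2)},U^{(3)}$ generating the approximate $SO(3)$-action, with $\mathcal{L}_{U^{(a)}}g$ small in a suitable weighted norm.

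Next I would correct each $U^{(a)}$ to an exact Killing field. The key is to solve the vector equation $\Delta_f U + \Ric(U) = 0$, with $\Delta_f := \Delta - \nabla_{\nabla f}$ the drift Laplacian, prescribing the asymptotics dictated by the approximate fields; existence follows by solving on an exhaustion and passing to a limit via a priori weighted estimates. The point of this choice of equation is that $h := \mathcal{L}_U g$ then satisfies the homogeneous second-order system of Lichnerowicz type $\Delta_f h + \Rm \ast h = 0$, where $\ast$ denotes a contraction against the curvature tensor, and the asymptotic analysis guarantees that $h$ decays at infinity.

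The main obstacle, and the analytic core of the proof, is a Liouville-type theorem asserting that any decaying solution $h$ of this elliptic system vanishes identically. Here I would exploit the positive curvature and the precise decay of $R$ to derive a weighted differential inequality $\Delta_f \Phi \geq 0$ for a scalar quantity $\Phi$ built from $h$ (e.g.\ $\abs{h}^{2}$ weighted by a power of $R$), and conclude $h\equiv 0$ from the maximum principle together with the controlled behavior at the tip and at infinity. Vanishing of $h$ makes each $U^{(a)}$ a genuine Killing field; together they generate an effective isometric $SO(3)$-action, so $(M,g)$ is $O(3)$-rotationally symmetric. A final reduction of the rotationally symmetric steady-soliton equation to an ODE then identifies $(M,g,f)$ uniquely with the Bryant soliton.
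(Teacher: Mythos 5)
First, a structural point: the paper you were given does not prove this statement at all. It is Brendle's theorem, quoted from \cite{Brendle:3DSolitonUniqueness} as motivation for the authors' own result on expanding K\"ahler--Ricci solitons, and the paper's only account of its proof is the one-sentence summary that one perturbs the approximate Killing vectors supplied by the asymptotic geometry (available in dimension three thanks to $\kappa$-noncollapsing) into exact Killing vectors. Measured against Brendle's actual argument, your outline has the right architecture --- positive curvature, asymptotic roundness at infinity, correction of approximate Killing fields by a linear elliptic equation, a Lichnerowicz-type equation for $\mathscr{L}_U g$, and a Liouville/maximum-principle step --- so as a table of contents for \cite{Brendle:3DSolitonUniqueness} it is accurate.

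As a proof, however, it has genuine gaps. First, your central PDE carries the wrong sign on the drift term. On a steady soliton ($\Ric = D^2 f$, $X = \nabla f$) the equation one must impose is $\Delta U + D_X U = 0$: precisely this condition makes $V(t) = \phi_t^* U$ (with $\phi_t$ the flow of $-X$) a solution of $\partial_t V = \Delta V + \Ric(V)$ along the self-similar Ricci flow $g(t) = \phi_t^* g$, and hence makes $h = \mathscr{L}_U g$ solve $\Delta_L h + \mathscr{L}_X h = 0$; this is the steady analogue of Proposition \ref{prop:lichPDE}, where the sign is likewise $+ D_X U$. Your equation $\Delta_f U + \Ric(U) = 0$ with $\Delta_f = \Delta - D_X$ reduces, using $\Ric(U) = D_U X$, to $\Delta U - [X,U] = 0$, which is a different equation; with it $\mathscr{L}_U g$ does not satisfy the homogeneous Lichnerowicz system, and the drift in your tensor equation ``$\Delta_f h + \Rm \ast h = 0$'' is reversed as well (the correct equation expands to $\Delta h + D_X h + 2\Rm \ast h = 0$). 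Since every subsequent maximum-principle assertion depends on the direction of this drift, this is not a cosmetic slip. Second, the steps you phrase as ``I would show'' --- nonnegativity of sectional curvature (which in 3D needs B.-L.~Chen's theorem before the strong maximum principle dichotomy applies), the quantitative convergence of rescaled cross-sections to round spheres (this is where $\kappa$-noncollapsing enters, via Perelman's blow-down to a shrinking cylinder, and it occupies most of Brendle's paper), the weighted existence theory for the correction, and above all the Liouville theorem --- are the entire content of the result. Note in particular that the steady case cannot be handled by mimicking Proposition \ref{prop:LichPDEbarrier} of the present paper: there the barrier $2\Ric + g \geq g$ is uniformly positive, whereas on a steady soliton the natural supersolution $\mathscr{L}_X g = 2\Ric$ decays at infinity, which is exactly why Brendle's Liouville step requires delicate weighted estimates rather than a soft barrier argument. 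In short, you have reproduced the strategy, but not a proof.
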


Brendle has also shown in \cite{Brendle:HighDimSoliton} that in dimensions greater than three, a steady soliton with positive sectional curvature which (parabolically) blows down to shrinking cylinder must be $O(n)$-rotationally invariant. One of the main ideas in \cite{Brendle:3DSolitonUniqueness,Brendle:HighDimSoliton} is to use the soliton equations to perturb approximate Killing vectors into actual Killing vectors. The approximate Killing vectors come from an assumption about the asymptotic geometry of the soliton (these assumptions are always satisfied in three dimensions, as long as the soliton is $\kappa$-noncollapsed). 

The approximate Killing vector technique of Brendle was subsequently used by the first named author in \cite{Chodosh:ExpandBrySol} to show that expanding (Riemannian) Ricci solitons which are asymptotic in a certain sense to the $O(n)$-rotationally symmetric conical metric $h_{\alpha} = dr^{2} + r^{2}(1-\alpha) g_{S^{n-1}}$ must be $O(n)$-rotationally invariant themselves. We note that such $O(n)$-rotationally symmetric solitons asymptotic to these cones have been constructed by Bryant in \cite{Bryant:Solitons}, see also \cite[Appendix A]{Chodosh:ExpandBrySol}. A similar result was obtained independently by Deruelle in \cite{Deruelle:ExpandBrySol}. 

However, several new arguments are needed to handle the K\"ahler setting in this paper. In particular, one interesting feature of the present work is that instead of perturbing the approximate Killing vectors to become actual Killing vectors, we show that the original Killing vector fields of the rotationally symmetric K\"ahler cone must in fact be Killing vectors on the soliton as well, by a combination of a Liouville type argument and a barrier argument for a Lichnerowicz PDE. The latter is a barrier argument in the vein of \cite[Proposition 5.1]{Chodosh:ExpandBrySol}, but we remark that it is also somewhat different because instead of using positive sectional curvature, we must use the positivity of holomorphic bisectional curvature (which is a weaker condition than positive sectional curvature). We finally note that thanks to the strength of our Liouville type theorem, we are also able to weaken somewhat the asymptotic conditions as compared with \cite{Chodosh:ExpandBrySol}. 

The paper is organized as follows. We first give relevant definitions and background material in Section \ref{sec:prelim}, including a discussion of K\"ahler cones and holomorphic bisectional curvature. In Section \ref{sec:refine-asymp}, we show that the asymptotically conic assumption implies a refined asymptotic statement. In particular, we show that if Definition \ref{defi:asym-cone} holds, then the soliton potential function is controlled in a $C^{2}$-sense and we may perturb the coordinate system (while still preserving the asymptotically conical condition) so that the soliton potential achieves its minimum at the origin. This will later prove crucial for the proof, as then $\nabla f$ and the Killing vectors on $\C^{n}$ corresponding to rotational symmetry will both vanish at the origin. 

In Section \ref{sec:PDES}, we present the key ingredients which will be used to establish the main result. We begin by recalling that if a vector field $U$ satisfies $\Delta U + D_{X}U -\frac 12 U=0$ (here $X=\nabla f$ is the soliton vector field) on an expanding Ricci soliton, then the Lie derivative $h=\mathscr{L}_{X}g$ satisfies the Lichnerowicz PDE, $\Delta_{L}h + \mathscr{L}_{X}h - h =0$. We show, using a Liouville type argument that if a real holomorphic vector field approximately satisfies the first PDE and if it vanishes in the same place as $X$, then it must in fact satisfy the PDE exactly. We also show, using $2\Ric +g$ as a barrier, that solutions to the Lichnerowicz PDE which decay at infinity must vanish identically. 

The proof of the main theorem is given in Section \ref{sec:proof-main-res}. The main idea of the proof is that, after ``centering'' the asymptotically conical coordinate system, we may take vectors fields on $\C^{n}$ corresponding to the Killing vectors generating rotations of the K\"ahler cone. Then, pushing them forward to the soliton, the asymptotics will imply that they are approximate Killing vectors and that they approximately satisfy the first PDE discussed above. Then, by the results of Section \ref{sec:PDES}, we first see that they must satisfy the vector field PDE exactly. Furthermore, using our results concerning solutions to the Lichnerowicz PDE, we see that they are exact Killing vectors for the soliton. 

Finally, we include a brief discussion concerning shrinking solitons in Section \ref{sec:shrink}. 

\section{Preliminaries}\label{sec:prelim}
In this section, we introduce relevant definitions and background material. We first remark that in the remainder of the of the work it will be convenient to define $X = \nabla f$ to be the soliton vector field. We remark that because the Ricci tensor $\Ric$ and the metric $g$ are of $(1,1)$-type, the soliton equation \eqref{eq:KRS} forces $X$ to be real holomorphic (recall that a vector field $U$ is real holomorphic if the $(1,0)$ part of $U$ has holomorphic coefficients).

\subsection{K\"ahler cones}\label{sect:kahler-cone}
For any $\alpha \in (0,1)$, denote by $g_\alpha$ the \emph{K\"ahler cone} metric with cone angle $2\pi\alpha$, i.e.
$$g_\alpha = 2 \textup{Re}\left(\ddbar |z|^{2\alpha}\right),$$
where $|z|$ is the Euclidean norm of $z = (z_1, \ldots, z_n) \in \C^n \backslash \{0\}$.

The cone metric is a warped product of $(0,\infty)$ and $S^{2n-1}$ with a Berger's sphere metric (we emphasize that this is \emph{not} the family of $O(n)$-invariant cones considered in e.g.\ \cite{Chodosh:ExpandBrySol} as the $O(n)$-invariant cones have cross sections of constant sectional curvature, in contrast with the $U(n)$-invariant K\"ahler cones just defined). The metric $g_{\alpha}$ may be written in the equivalent forms
\begin{align*}
g_\alpha & = dr^2 + r^2\left(\frac{\alpha^2}{4}d\theta^2 + \frac{\alpha}{4}\pi^*g_\FS\right)\\
& = dr^2 + (dr \circ J)^2 + \frac{\alpha r^2}{4}\pi^*g_\FS
\end{align*}
Here $S^1 \to S^{2n-1} \xrightarrow{\pi} \CP^{n-1}$ is the Hopf fibration map and $\theta$ denotes the $S^1$-fiber direction. $J$ is the unique complex struture such that $g_\alpha$ is $J$-Hermitian. Furthermore, $g_\FS$ is the Fubini-Study metric of $\CP^{n-1}$. The group $U(n)$ acts on these K\"ahler cones in manner which preserves the norm $|z|$. 
\begin{remark} We note that the radial distance function $r$ with respect to $g_{\alpha}$ is equal to $|z|^\alpha$. In particular, we caution the reader that $g_{\alpha}$ is \emph{not} uniformly equivalent to the flat metric $\delta$ on $\C^{n}$. In particular, when we use $r$ in the remainder of the work, we will \emph{only} mean the radial distance function with respect to $g_{\alpha}$. 
\end{remark}

To refine this remark, we note that in standard $\{z_i\}$ coordinates, the cone metric can be expressed as
$$(g_\alpha)_{i\bar{j}} = \alpha |z|^{2(\alpha-1)}\left(\delta_{ij} + (\alpha-1)|z|^{-2}\bar{z}_iz_j\right).$$
The eigenvalues of $(g_\alpha)_{i\bar{j}}$ are 
$$\alpha^2|z|^{2(\alpha-1)}, \alpha|z|^{2(\alpha-1)}, \ldots, \alpha|z|^{2(\alpha-1)}$$
which tend to $0$ as $|z| \to \infty$.

This is coherent with our remark above, and we see that if we have control of the norm of some quatnity, the norm taken with respect to one of the metrics $\delta$ and $g_{\alpha}$, then we may also control the norm with respect to the other (with a different rate of decay, corresponding these eigenvalues). We will make use of this in the proof of Lemma \ref{lem:liouville}. On the other hand, $F^*g$ and $g_\alpha$ are uniformly equivalent by the asymptotically conic condition, as established in Corollary \ref{coro:unwrap-conical}.

\subsection{Holomorphic bisectional curvature}
A K\"ahler manifold is a triple $(M,J,g)$ where $J$ is the complex structure and $g$ is a $J$-invariant metric such that $J$ is parallel with respect to $g$. On a K\"ahler manifold, one can define the holomorphic bisectional curvature $K_\C$ as
$$K_\C (X, Y) = \frac{\Rm(X, JX, Y, JY)}{|X|^2|Y|^2 - g(X, Y)^2}$$
for any $X, Y \in TM$. On the complexified tangent bundle $T_\C M = T^{1,0}M \oplus T^{0,1}M$, if one denotes
\begin{align*}
U & = \frac{1}{2}\left(X - \sqrt{-1}JX\right) \in T^{1,0}M\\
V & = \frac{1}{2}\left(Y - \sqrt{-1}JY\right) \in T^{1,0}M,
\end{align*}
then it is not hard to see that
$$K_\C (X,Y) = \frac{\Rm(U, \overline{U}, \overline{V}, V)}{|U|^2|V|^2 - |g(U,\overline{V})|^2}.$$

We say $(M, J, g)$ has positive holomorphic bisectional curvature if $K_\C(X,Y) > 0$ for any non-zero $X, Y \in TM$, or equivalently, $\Rm(U, \overline{U}, \overline{V}, V) > 0$ for any $U, V \in T^{1,0}M$. Note that positive sectional curvature implies positive holomorphic bisectional curvature, but \textit{not} vice versa. Moreover, positive holomorphic bisectional curvature implies positive Ricci curvature, but again \textit{not} vice versa. 

It was proved by Siu-Yau in \cite{SiuYau:1980} that any compact K\"ahler manifold with positive bisectional curvature must be biholomorphic to $\CP^n$ (see also \cite{Mori:FrankelConj}). Moreover, it was shown by Bando \cite{Bando:1984} and Mok \cite{Mok:Uniformization} that positivity of holomorphic bisectional curvature is preserved under the K\"ahler-Ricci flow. For complete, non-compact K\"ahler manifolds with positive bisectional curvature, it is conjectured by Yau that any such manifold must be biholomorphic to $\C^n$. This conjecture remains unresolved, in spite of many partial results as discussed in the introduction to this work. 

\section{Refined Conical Asymptotics}\label{sec:refine-asymp}

In this section we consider an expanding K\"ahler-Ricci soliton with positive bisectional curvature, which is asymptotically conical in the sense of Definition \ref{defi:asym-cone}. It will be necessary for the analysis in the subsequent sections to show that the asymptotically conical condition implies several refined properties which are a priori more restrictive. We first show that we may extend the biholomorphism in Definition \ref{defi:asym-cone} inwards to give a global biholomorphism between $\C^{n}$ and $M^{2n}$ in which the asymptotics are still valid.\footnote{We thank Yasha Eliashberg for explaining to us the proof of Lemma \ref{lem:bihol-extend}.} 

\begin{lemma}\label{lem:bihol-extend}
For $n\geq 2$, if $(M^{2n},g,f)$ is an expanding K\"ahler-Ricci soliton which is asymptotically conical and has positive holomorphic bisectional curvature, then we may extend the biholomorphism $F: \C^{n}\backslash K_{1}\to M^{2n}\backslash K_{2}$ into the compact set $K_{1}$, yielding $F:\C^{n}\to M^{2n}$ a biholomorphism still satisfying Definition \ref{defi:asym-cone}.
\end{lemma}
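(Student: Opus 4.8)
The plan is to reduce the problem to extending a biholomorphism between complements of compact sets in $\C^{n}$ and then to invoke the Hartogs extension phenomenon, which is where the hypothesis $n\geq 2$ enters. First I would record that $M$ is itself biholomorphic to $\C^{n}$: by the discussion in the introduction, $(M,g)$ has nonnegative (indeed positive) holomorphic bisectional curvature, its curvature is bounded (it decays at infinity by the asymptotically conical condition, and $M$ is smooth on the compact part), and $(M,g)$ has maximal volume growth because its tangent cone at infinity is the $U(n)$-symmetric K\"ahler cone, which has Euclidean volume growth. Hence the theorem of Chau--Tam \cite{ChauTam:OnComplexStr} provides a biholomorphism $\Phi\colon M\to\C^{n}$. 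Composing, $G_{0}:=\Phi\circ F$ is a biholomorphism from $\C^{n}\setminus K_{1}$ onto $\C^{n}\setminus \Phi(K_{2})$, with $\Phi(K_{2})$ compact.

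Next I would extend $G_{0}$ across $K_{1}$. Since $n\geq 2$, each of the $n$ holomorphic coordinate functions of $G_{0}$, being holomorphic on the complement of the compact set $K_{1}$, extends holomorphically to all of $\C^{n}$ by the Hartogs extension theorem. This produces a holomorphic map $G\colon \C^{n}\to\C^{n}$ with $G=G_{0}$ on $\C^{n}\setminus K_{1}$. It then remains to show that $G$ is a biholomorphism, after which $\tilde F:=\Phi^{-1}\circ G$ is the desired extension: it is a biholomorphism $\C^{n}\to M$, and $\tilde F=\Phi^{-1}\circ G_{0}=F$ on $\C^{n}\setminus K_{1}$, so it agrees with $F$ near infinity and therefore still satisfies Definition \ref{defi:asym-cone}, which only constrains the $\lambda\to\infty$ behaviour.

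To see that $G$ is a biholomorphism I would first argue that $G$ is proper. If $z_{k}\to\infty$ with $G(z_{k})=G_{0}(z_{k})$ bounded, then after passing to a subsequence $G(z_{k})\to w_{\infty}$; if $w_{\infty}\notin\Phi(K_{2})$ then continuity of $G_{0}^{-1}$ forces $z_{k}\to G_{0}^{-1}(w_{\infty})$, a finite point, a contradiction, so the only possibility is $w_{\infty}\in\Phi(K_{2})$, i.e.\ $F(z_{k})=\Phi^{-1}(G_{0}(z_{k}))$ converges to a point of $K_{2}$. This is precisely the statement that $F$ sends the end at infinity of $\C^{n}\setminus K_{1}$ into the ``inner'' boundary near $K_{2}$, and it is ruled out by the asymptotically conical condition: under $\rho_{\lambda}$-rescaling, $F^{*}g$ converges to $g_{\alpha}$, whose radial distance $r=|z|^{\alpha}$ tends to infinity as $|z|\to\infty$, so $F(z_{k})$ leaves every $g$-bounded set and cannot approach the compact set $K_{2}$. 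Since $G$ is continuous on the compact set $K_{1}$ and proper toward infinity, preimages of bounded sets are bounded, so $G$ is proper. A proper holomorphic map $\C^{n}\to\C^{n}$ is a finite branched covering of some degree $d\geq 1$; choosing a regular value $w$ in the nonempty open set $\C^{n}\setminus(\Phi(K_{2})\cup G(K_{1}))$, all preimages of $w$ lie in $\C^{n}\setminus K_{1}$, where $G=G_{0}$ is injective, so $w$ has exactly one preimage and $d=1$. A degree-one proper holomorphic map is a biholomorphism.

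The main obstacle is the properness step, namely ensuring that $F$ does not interchange the two ends (infinity versus a neighbourhood of $K_{2}$); this is, apart from the Hartogs extension itself, the only place where both $n\geq 2$ and the precise asymptotically conical hypothesis are genuinely needed, and it is exactly what forces the degree count to give $d=1$ rather than a nontrivial branched cover.
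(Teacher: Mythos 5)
Your proposal is correct, and its first half coincides with the paper's proof: both invoke Chau--Tam to identify $M$ biholomorphically with $\C^n$ (your $\Phi$ is exactly the paper's choice of global holomorphic coordinates on $M$), and both then apply Hartogs' extension theorem --- the only place $n\geq 2$ is used --- to extend the components of $F$ across $K_1$. Where you genuinely diverge is in showing that the extension is a biholomorphism. The paper argues purely complex-analytically: the locus where $D\hat F$ drops rank is a codimension-one analytic subvariety contained in $K_1$ (since $F$ is a biholomorphism outside), hence empty because a positive-dimensional analytic subvariety of $\C^n$ cannot be compact; so $\hat F$ is a local biholomorphism, ``thus a covering map,'' and a singleton fiber forces bijectivity. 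You instead establish properness of $G$ by ruling out ``end-swapping'' via the metric asymptotics, and then run a degree argument for proper holomorphic maps. Your route buys something real: the paper's step ``immersion and thus a covering map'' tacitly requires exactly the properness you prove (a local diffeomorphism need not be a covering map without it), while the paper's rank computation becomes unnecessary for you, since a proper holomorphic map of degree one is automatically a biholomorphism. Two caveats. First, your one-line justification that $F(z_k)$ ``leaves every $g$-bounded set'' needs more care: a path in $M$ from a fixed basepoint to $F(z_k)$ could a priori detour through $K_2$, so one must bound from below the length of an \emph{arbitrary} such path; this follows from the comparison $F^*g \geq (1-\e)\, g_\alpha$ on $\{|z|\geq R_0\}$ (a consequence of the $C^0_{\loc}$ rescaled convergence together with the self-similarity $\lambda^{-2\alpha}\rho_\lambda^* g_\alpha = g_\alpha$), a last-crossing argument for the spheres $\{|z|=\textup{const}\}$, and completeness of $(M,g)$. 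Second, end-swapping can be excluded with no metric estimate at all: if the end at infinity of $\C^n\setminus K_1$ were sent by $G_0$ toward $\Phi(K_2)$, then an end of $\C^n\setminus K_1$ approaching $\partial K_1$ would have to be sent to infinity, so $G_0$ would be unbounded near $K_1$, contradicting the continuity of its Hartogs extension $G$ on the compact set $K_1$; this observation is also what fills in the properness implicitly needed in the paper's own covering-map step.
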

\begin{proof}
By the work of Chau--Tam and Ni (as discussed in the introduction) $M^{2n}$ is biholomorphic  to $\C^{n}$ (see \cite[Corollary 1.1]{ChauTam:OnComplexStr}). The issue here however, is whether we can find a biholomorphism $F$ which still has the desired conical asymptotic properties. Fixing global holomorphic coordinates $\{z^{i}\}_{i=1}^{n}$ on $M^{2n}$, we may consider the components of the map $F: \C^{n}\backslash K_{1}\to M^{2n}\backslash K_{2}$ with respect to these coordinates $F=(F_{1},\dots, F_{n})$. Now (enlarging the compact sets if necessary so that their complement is connected) Hartogs' extension theorem (which applies because we have assumed that $n\geq 2$) guarantees that the $F_{i}$ extend inward giving holomorphic maps $\hat F_{i}: \C^{n}\to M^{2n}$. 

Now, we claim that  $\hat F = (\hat F_{1},\dots, \hat F_{n})$ is a biholomorphism. First of all, the set where $D\hat F$ does not have maximal rank is a holomorphic subvariety of codimension $1$. Such a set, if non-empty, cannot be contained in a compact set, e.g.\ $K_{1}$ by the maximum principle. Thus we see that $D \hat F$ has maximal rank in all of $\C^{n}$, so it must be an immersion and thus a covering map. Finally, there must be points in $M^{2n}\backslash K_{2}$ with a single preimage, and thus $\hat F$ is a bijection. 
\end{proof}

By the previous lemma, we can (and will) assume, without loss of generality, that $K_{1}=K_{2}=\emptyset$ in Definition \ref{defi:asym-cone}. We now show that we may precompose the biholomorphism from Definition \ref{defi:asym-cone} by any shifting automorphism of $\C^{n}$ without changing any asymptotic conditions stated in Corollary \ref{coro:unwrap-conical}. This will allow us to construct a biholomorphism ${F} : \C^n \to M$ such that ${F}(0)$ is the (unique) critical point of $f$ in $M$. This will be crucial in the proof of Lemma \ref{lem:liouville}. Given any point $p \in \C^n$, we define the \emph{shift map} $\Lambda_p : \C^n \to \C^n$ by $\Lambda_p (z) = z + p$.

\begin{lemma}\label{lma:centering}
Shifting the coordinate system via $\Lambda_{p}$ preserves the asymptotically conical condition. More precisely, if $\tilde F:\C^{n}\to M$ is a biholomorphism so that Definition \ref{defi:asym-cone} is satisfied, then by defining $F : = \tilde{F} \circ \Lambda_{p}$, we have that $F$ is a biholomorphism still satisfying Definition \ref{defi:asym-cone}, i.e.\ the metric $F^{*}g$ satisfies 
\begin{equation*}
\lim_{\lambda\to\infty} \lambda^{-2\alpha} \rho_{\lambda}^{*}(F^{*}g)) = g_{\alpha}
\end{equation*}
in $C^{2}_{\loc}(\C^{n}\backslash \{0\},g_{\alpha})$

\end{lemma}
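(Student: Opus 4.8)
The plan is to reduce everything to a single algebraic commutation between dilations and shifts, followed by the continuity of pullback under maps that converge to the identity. Writing $\tilde g := \tilde F^* g$ and noting that $F^* g = (\tilde F \circ \Lambda_p)^* g = \Lambda_p^* \tilde g$, I would first record the identity $\Lambda_p \circ \rho_\lambda = \rho_\lambda \circ \Lambda_{p/\lambda}$, which holds because $\lambda z + p = \lambda(z + p/\lambda)$. Taking pullbacks gives $\rho_\lambda^* \Lambda_p^* = \Lambda_{p/\lambda}^* \rho_\lambda^*$, and hence
\[
\lambda^{-2\alpha} \rho_\lambda^*(F^* g) = \Lambda_{p/\lambda}^*\left(\lambda^{-2\alpha}\rho_\lambda^* \tilde g\right).
\]
Setting $h_\lambda := \lambda^{-2\alpha}\rho_\lambda^*\tilde g$, the hypothesis that $\tilde F$ satisfies Definition \ref{defi:asym-cone} says precisely that $h_\lambda \to g_\alpha$ in $C^2_{\loc}(\C^n\backslash\{0\}, g_\alpha)$. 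The lemma is thus reduced to showing that $\Lambda_{p/\lambda}^* h_\lambda \to g_\alpha$ in the same topology, where now $\Lambda_{p/\lambda} \to \id$ as $\lambda \to \infty$ since $p/\lambda \to 0$.

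To establish this convergence I would split
\[
\Lambda_{p/\lambda}^* h_\lambda - g_\alpha = \Lambda_{p/\lambda}^*(h_\lambda - g_\alpha) + \left(\Lambda_{p/\lambda}^* g_\alpha - g_\alpha\right)
\]
and show each term tends to zero in $C^2_{\loc}(\C^n\backslash\{0\}, g_\alpha)$. Fix a compact $K \subset \C^n\backslash\{0\}$; since $p/\lambda\to 0$, for all large $\lambda$ the translate $K + p/\lambda$ lies in a single fixed compact $K' \subset \C^n\backslash\{0\}$. Because $\Lambda_{p/\lambda}$ is a translation, its differential is the identity and its higher derivatives vanish, so in coordinates the pullback acts by $(\Lambda_{p/\lambda}^* T)(z) = T(z + p/\lambda)$ componentwise, with derivatives similarly shifted. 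Thus the coordinate $C^2(K)$ norms of $\Lambda_{p/\lambda}^*(h_\lambda - g_\alpha)$ equal those of $h_\lambda - g_\alpha$ on $K'$, which tend to zero by hypothesis; converting between coordinate norms and $g_\alpha$-norms introduces only factors bounded uniformly above and below on $K'$ (away from the origin), so the first term vanishes in the limit. For the second term, $g_\alpha$ is smooth away from the origin, so by uniform continuity of $g_\alpha$ and its first two derivatives on $K'$ together with $p/\lambda \to 0$, one gets $\Lambda_{p/\lambda}^* g_\alpha \to g_\alpha$ in $C^2(K)$.

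The only genuine care — and what I expect to be the main, albeit minor, obstacle — is the bookkeeping at the puncture: translations do not fix the origin, so I must verify that compact sets bounded away from the vertex remain uniformly bounded away from the vertex under the shrinking shifts $\Lambda_{p/\lambda}$, and that norms measured with respect to $g_\alpha$ (whose coefficients and connection degenerate at the origin) stay comparable after such a shift. Both hold for large $\lambda$ precisely because $p/\lambda \to 0$ keeps everything inside a fixed compact annular region where $g_\alpha$ is smooth and nondegenerate. A clean way to package the whole argument is to invoke joint continuity of the pullback map $(\phi, h)\mapsto \phi^* h$ in the appropriate $C^3_{\loc}\times C^2_{\loc}$ topology on compacta bounded away from the origin, applied to $\phi = \Lambda_{p/\lambda} \to \id$ and $h = h_\lambda \to g_\alpha$; the explicit splitting above is just a hands-on verification of that continuity, made transparent by the fact that the $\Lambda_{p/\lambda}$ are translations.
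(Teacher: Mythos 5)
Your proof is correct, but it takes a genuinely different route from the paper's. The paper fixes the shift and works at spatial infinity: writing $\tilde g = \tilde F^*g$, it establishes the decay estimate $r^{j}\left|\nabla^{j}\left(\Lambda_{p}^{*}g_{\alpha}-g_{\alpha}\right)\right|_{g_{\alpha}} = o(1)$ for $j=0,1,2$, by computing the eigenvalues of the endomorphism $g_{\alpha}^{-1}(\Lambda_{p}^{*}g_{\alpha})$ explicitly -- they are all equal to $|z+p|^{2(\alpha-1)}/|z|^{2(\alpha-1)} \to 1$ -- so a \emph{fixed} translation becomes negligible at large radius; combined with the decay of $k = \tilde g - g_\alpha$ this gives the assertion. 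You instead use the commutation $\rho_{\lambda}^{*}\Lambda_{p}^{*} = \Lambda_{p/\lambda}^{*}\rho_{\lambda}^{*}$ to stay entirely at the blow-down scale, where the translation degenerates to the identity, and then conclude by soft continuity of pullback on compacta away from the vertex. Your argument buys generality and economy: it never uses the explicit formula for $g_{\alpha}$, its dilation-homogeneity, or any decay rates, and it would work verbatim with $g_{\alpha}$ replaced by any limit metric smooth and nondegenerate away from the origin. The paper's computation buys something too: it produces the quantitative statement that $\Lambda_{p}^{*}g_{\alpha} - g_{\alpha}$ decays in the weighted $C^{2}$ sense, which is exactly the form of the asymptotics (the tensor $k$ with $r^{j}|\nabla^{j}k| = o(1)$ in Corollary \ref{coro:unwrap-conical}) that the subsequent sections rely on, so the shift is seen directly to preserve those refined conditions rather than only the blow-down limit of Definition \ref{defi:asym-cone}. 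Both proofs are complete for the lemma as stated; your careful handling of the puncture (translates of a compact set staying in a fixed compact annulus) is precisely the point that needs checking, and you check it.
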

\begin{proof}
We first claim that for $j=0,1,2$, $r^{j}|\nabla^{j}(\Lambda_{p}^{*}g_{\alpha}-g_{\alpha})| = o(1)$ as $r \to \infty$. To see this, first observe that
\begin{equation}\label{eq:shift_g}
g_\alpha^{-1} (\Lambda_{p}^{*}g_{\alpha}-g_{\alpha}) = g_\alpha^{-1} (\Lambda_p^* g_\alpha) - \text{Id}.
\end{equation}
The eigenvalues of the endomorphism $g_\alpha^{-1} (\Lambda_p^* g_\alpha)$ with respect to the standard $z^k$-coordinate basis are
$$\frac{|z+p|^{2(\alpha-1)}}{|z|^{2(\alpha-1)}}, \ldots, \frac{|z+p|^{2(\alpha-1)}}{|z|^{2(\alpha-1)}}$$
which tend uniformly to 1 as $|z| \to \infty$. Thus, the above claim holds for $j=0$. Furthermore, by differentiating \eqref{eq:shift_g} and using the fact that $\Lambda_p$ is linear, one can verify that in fact the claim holds when $j = 1, 2$ as well. The assertion follows easily from this fact. 
\end{proof}

As a result of the the above lemma, we may ``center'' the biholomorphism $F$ in the asymptotically conical condition, i.e.\ by an appropriate shift, we have that $F(0)$ is the (unique) critical point of the potential function $f$ (there a unique critical point due to the fact that it the potential is strongly convex as seen from the soliton equation in combination with non-negativity of the Ricci curvature). We now identify the asymptotic behavior of the soliton potential function.

\begin{lemma}\label{lem:solpot-asymp}
If $(M,g,f)$ is an expanding K\"ahler-Ricci soliton which asymptotically conical, then the soliton potential function satisfies
\begin{equation*}
\lim_{\lambda\to\infty}\lambda^{-2\alpha} f\circ F \circ \rho_{\lambda} = \frac{r^{2}}{4}
\end{equation*}
in $C^{2}_{loc}(\C^{n}\backslash \{0\},g_{\alpha})$.
\end{lemma}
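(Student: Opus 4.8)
The plan is to rewrite everything in terms of the rescaled data $g_{\lambda} := \lambda^{-2\alpha}\rho_{\lambda}^{*}(F^{*}g)$ and $f_{\lambda} := \lambda^{-2\alpha}(f\circ F\circ\rho_{\lambda})$, and to combine the two standard identities for a gradient soliton with the $C^{2}_{\loc}$ convergence $g_{\lambda}\to g_{\alpha}$ of Definition \ref{defi:asym-cone}. The two identities I would use are the Hessian equation itself, \eqref{eq:KRS}, and the conserved quantity coming from the contracted Bianchi identity: for the expander normalization of \eqref{eq:KRS} one has $R + \abs{\nabla f}^{2} - f = C$ for a constant $C$. Since positive holomorphic bisectional curvature gives $R>0$, and $C^{2}_{\loc}$ convergence to the smooth cone forces the scalar curvature to decay, I would record at the outset that $\lambda^{-2\alpha}(R\circ F\circ\rho_{\lambda}) = \lambda^{-4\alpha}R(g_{\lambda})\to 0$ locally uniformly.

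Next I would rescale the two identities. Ricci curvature is invariant under constant rescalings of the metric, and the Hessian of a function does not see such rescalings, so pulling \eqref{eq:KRS} back by $F\circ\rho_{\lambda}$ and dividing by $2\lambda^{2\alpha}$ gives $D^{2}_{g_{\lambda}}f_{\lambda} = \tfrac12 g_{\lambda} + \lambda^{-2\alpha}\Ric(g_{\lambda})$, whose right-hand side converges in $C^{0}_{\loc}$ to $\tfrac12 g_{\alpha}$. Likewise, a short scaling computation shows $\lambda^{-2\alpha}\abs{\nabla f}^{2}\circ F\circ\rho_{\lambda} = \abs{\nabla_{g_{\lambda}}f_{\lambda}}^{2}_{g_{\lambda}}$, so the conserved identity rescales to $f_{\lambda} = \abs{\nabla_{g_{\lambda}}f_{\lambda}}^{2}_{g_{\lambda}} + o(1)$ in $C^{0}_{\loc}$ (the constant $C$ and the curvature term both dropping out in the limit).

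To extract a limit I first need a $C^{0}_{\loc}$ bound on $f_{\lambda}$. For this I would run the Cao--Zhou-type bootstrap: from $R + \abs{\nabla f}^{2} - f = C$ with $R$ bounded one gets $\abs{\nabla_{g}\sqrt{f}}^{2} = \abs{\nabla f}^{2}/(4f)\to\tfrac14$, whence $\sqrt{f}\sim\tfrac12 d_{g}(p,\cdot)$; since $F^{*}g$ is uniformly equivalent and asymptotic to $g_{\alpha}$ (Corollary \ref{coro:unwrap-conical}), $d_{g}(p,F(\lambda z))=\lambda^{\alpha}r(z)(1+o(1))$, so $f_{\lambda}$ is bounded on compact subsets of $\C^{n}\backslash\{0\}$. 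Combined with the two-sided Hessian bound from the rescaled soliton equation, interior elliptic estimates yield uniform $C^{1,\beta}_{\loc}$ bounds and hence subsequential $C^{1}_{\loc}$ convergence $f_{\lambda}\to f_{\infty}$; writing $\partial_{i}\partial_{j}f_{\lambda} = (D^{2}_{g_{\lambda}}f_{\lambda})_{ij}+\Gamma(g_{\lambda})^{k}_{ij}\partial_{k}f_{\lambda}$ and noting that both terms converge in $C^{0}_{\loc}$ upgrades this to $C^{2}_{\loc}$ convergence. The limit then satisfies $D^{2}_{g_{\alpha}}f_{\infty}=\tfrac12 g_{\alpha}$ and $f_{\infty}=\abs{\nabla_{g_{\alpha}}f_{\infty}}^{2}$.

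The main obstacle, and the only place the precise geometry of the cone enters, is identifying $f_{\infty}$ uniquely. Since $D^{2}_{g_{\alpha}}(r^{2}/4)=\tfrac12 g_{\alpha}$ on the cone, the difference $v=f_{\infty}-r^{2}/4$ satisfies $D^{2}_{g_{\alpha}}v=0$. Separating variables along the rays of the cone writes $v=a(x)r+b(x)$, and the tangential part of $D^{2}_{g_{\alpha}}v=0$ reduces to the Obata equation $\nabla^{2}_{L}a=-a\,g_{L}$ on the Berger-sphere link $L$. As $L$ is not the round unit sphere for $\alpha\in(0,1)$, Obata's theorem forces $a\equiv0$ and $b$ constant, so $v$ is constant. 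Finally $f_{\infty}=\abs{\nabla f_{\infty}}^{2}$ together with $\abs{\nabla(r^{2}/4)}^{2}=r^{2}/4$ (as $\abs{\nabla r}_{g_{\alpha}}=1$) forces this constant to vanish, so $f_{\infty}=r^{2}/4$. Because the limit is independent of the subsequence, the full family $f_{\lambda}$ converges to $r^{2}/4$ in $C^{2}_{\loc}$, as claimed.
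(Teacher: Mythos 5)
Your proposal is correct, but it is architected quite differently from the paper's proof, which is shorter and more direct: the paper obtains $C^{0}_{\loc}$ convergence immediately by citing the potential estimate $\abs{f - d^{2}/4} \leq O(1)$ of Chen--Deruelle together with $d \circ F = r + o(r)$, then reads off convergence of the Hessian from the rescaled soliton equation \eqref{eq:potent-rescale-sol} and boundedness of the gradient from the rescaled Hamilton identity \eqref{eq:potent-rescale-ham}; no compactness argument or identification of a limit is needed. You instead run a compactness-plus-uniqueness scheme: uniform $C^{0}_{\loc}$ bounds (derived by reproving a weak form of the Chen--Deruelle estimate, $\sqrt{f} = \tfrac{1}{2}d(1+o(1))$, from Hamilton's identity and the decay of $R$ forced by the conical asymptotics), interior elliptic estimates giving subsequential $C^{2}_{\loc}$ limits, and then rigidity of the limit system $D^{2}_{g_{\alpha}}f_{\infty} = \tfrac{1}{2}g_{\alpha}$, $f_{\infty} = \abs{\nabla f_{\infty}}^{2}$ on the cone. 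Your rigidity step checks out: the $rr$-component of $D^{2}_{g_{\alpha}}v = 0$ gives $v = a(x)r + b(x)$, the mixed components force $b$ constant, the tangential components are exactly the Obata equation $\nabla^{2}_{L}a = -a\,g_{L}$ on the link, and the link is not the round unit sphere for $\alpha \in (0,1)$ (else $g_{\alpha}$ would be flat), so $a \equiv 0$; the rescaled Hamilton identity then kills the constant. What your route buys is self-containedness --- only a coarse potential asymptotic is needed, not the $O(1)$ bound --- and it isolates the role of the cone geometry (non-round link) in pinning down the limit. What it costs, besides length, is a logical redundancy you should notice: your own bootstrap $\sqrt{f} = \tfrac{1}{2}d_{g}(p,\cdot)(1+o(1))$ combined with $d_{g}(p,F(\lambda z)) = \lambda^{\alpha}r(z)(1+o(1))$ already yields $f_{\lambda} \to r^{2}/4$ in $C^{0}_{\loc}$, which identifies every subsequential limit and makes the Obata argument unnecessary --- this shortcut is precisely the paper's proof. (One small point to make explicit in your bootstrap: you need $f \to \infty$ before dividing by it, which follows from the convexity $D^{2}f \geq \tfrac{1}{2}g$ given the positivity of the Ricci curvature, a standing assumption of the section.)
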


\begin{proof}
By, e.g.\ \cite[Lemma 2.2.3]{ChenDerulle:StructInftyExpandSol}, we know that $\left|f - \frac{d^2}{4}\right| \leq O(1)$ where $d$ is the distance function on $M$ from the minimum point of $f$. The conical asymptotics imply that $d \circ F = r + o(r)$, so we may write $\frac{d^{2}}{4} \circ F = \frac{r^{2}}{4} + \varphi$ for some $\varphi=o(r^{2})$. As such, we see that 
\begin{equation*}
\left| f \circ F - \frac{r^{2}}{4} - \varphi \right| \leq O(1).
\end{equation*}
Pulling back by $ \rho_{\lambda}$ and rescaling this yields
\begin{equation*}
\left| \lambda^{-2\alpha} f \circ F \circ \rho_{\lambda} - \frac{r^{2}}{4}  - \lambda^{-2\alpha}\varphi\circ\rho_{\lambda} \right| \leq C \lambda^{-2\alpha}
\end{equation*}
By the above asymptotic behavior of $\varphi$, we thus have established $C^{0}_{loc}$ convergence. Now, we establish the convergence of the higher derivatives. First, recall that the soliton equation may be written as $2\Ric_{g} + g = D_{g}^{2}f$. Pulling back by $F \circ \rho_{\lambda}$ and rescaling by $\lambda^{-2\alpha}$ yields (using the fact that the Ricci tensor does not scale under scalings of the metric)
\begin{equation}\label{eq:potent-rescale-sol}
2\lambda^{-2\alpha} \Ric_{\lambda^{-2\alpha} \rho_{\lambda}^{*}(F^{*}g)} + \lambda^{-2\alpha} \rho_{\lambda}^{*}(F^{*}g) = 2 D^{2}_{\lambda^{-2\alpha} \rho_{\lambda}^{*}(F^{*}g)}(\lambda^{-2\alpha} f\circ\rho_{\lambda}\circ F).
\end{equation}
By the asymptotically conical assumption, the left hand side of this expression converges as $\lambda \to \infty$. This establishes the convergence of $D^{2}_{g_{\alpha}} \lambda^{-2\alpha} f\circ F \circ \rho_{\lambda}$. Finally, rescaling Hamilton's identity $|\nabla f|^{2} +R = f$ (see \cite{Hamitlon:Singularities95}) in the same manner gives 
\begin{equation}\label{eq:potent-rescale-ham}
|\nabla (\lambda^{-2\alpha} f \circ  \rho_{\lambda} \circ F )|_{\lambda^{-2\alpha} \rho_{\lambda}^{*} (F^{*}(g))}^{2} +  R_{\lambda^{-2\alpha} \rho_{\lambda}^{*} (F^{*}(g))} = \lambda^{-2\alpha} (f\circ \rho_{\lambda}\circ F)
\end{equation}
which, in combination with the conical asymptotics, shows that at the very least the gradient of the rescaled potential is bounded. Given this, it is clear that the above convergence results imply the full claim.
\end{proof}

 The following corollary is a straightforward consequence of the above results, and is written in a form which is more convenient for later analysis. 
\begin{corollary}\label{coro:unwrap-conical}
The asymptotically conical condition, Definition \ref{defi:asym-cone}, implies that there is a biholomorphism $F: \C^{n}\to M$ so that $F(0)$ is a critical point of the potential function $f$ and moreover
\begin{enumerate}
\item the pulled back metric satisfies $F^{*}g = g_{\alpha} + k$ for some tensor $k$ satisfying $r^{j}|\nabla^{j}k| = o(1)$ as $r\to\infty$ for $j=0,1,2$, and
\item the soliton potential function satisfies $r^{j-2}\left| \nabla^{j}\left( f\circ F - \frac{r^{2}}{4} \right)\right| = o(1)$ as $r\to\infty$ for $j=0,1,2$.
\end{enumerate}
\end{corollary}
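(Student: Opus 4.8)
The plan is to synthesize Corollary \ref{coro:unwrap-conical} from the three preceding lemmas, treating it as a reorganization and slight strengthening of their content. First I would invoke Lemma \ref{lem:bihol-extend} to obtain a global biholomorphism $F:\C^n\to M$ (so that $K_1=K_2=\emptyset$), and then apply Lemma \ref{lma:centering} with an appropriate shift $\Lambda_p$ to ensure that the new biholomorphism sends the origin to the unique critical point of $f$; the centering lemma guarantees that this precomposition does not disturb the asymptotically conical convergence of Definition \ref{defi:asym-cone}. This establishes the first clause of the corollary's hypothesis, namely that such a centered $F$ exists.

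Next I would derive clause (1) from Definition \ref{defi:asym-cone} by the standard scaling translation. The definition states $\lambda^{-2\alpha}\rho_\lambda^*(F^*g)\to g_\alpha$ in $C^2_{\loc}(\C^n\setminus\{0\},g_\alpha)$. The key observation is that $g_\alpha$ is homogeneous of degree $2\alpha$ under the dilations $\rho_\lambda$, i.e.\ $\rho_\lambda^* g_\alpha = \lambda^{2\alpha} g_\alpha$, so writing $F^*g = g_\alpha + k$ defines the error tensor $k$, and the convergence statement becomes $\lambda^{-2\alpha}\rho_\lambda^* k \to 0$ locally in $C^2$. I would then unpack what this rescaled convergence means pointwise. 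Because $r = |z|^\alpha$ scales as $r\circ\rho_\lambda = \lambda^\alpha r$, and because derivatives $\nabla^j$ taken with respect to $g_\alpha$ carry their own scaling weights, evaluating the $C^2_{\loc}$ convergence on a fixed annulus (say $r\in[1,2]$) and dilating outward converts the local convergence at a fixed scale into a decay statement $r^j|\nabla^j k|\to 0$ as $r\to\infty$ for $j=0,1,2$. This is exactly the weighted decay asserted in clause (1).

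Clause (2) follows identically from Lemma \ref{lem:solpot-asymp}, which already gives $\lambda^{-2\alpha} f\circ F\circ\rho_\lambda \to r^2/4$ in $C^2_{\loc}(\C^n\setminus\{0\},g_\alpha)$. Since $r^2/4$ is itself homogeneous of degree $4\alpha$... no: here $r^2$ scales as $\lambda^{2\alpha}r^2$, matching the $\lambda^{2\alpha}$ weight, so setting $\psi = f\circ F - r^2/4$ the convergence reads $\lambda^{-2\alpha}\rho_\lambda^*\psi\to 0$ in $C^2_{\loc}$. Translating this rescaled local convergence into weighted pointwise decay by the same annulus-dilation argument yields $r^{j-2}|\nabla^j\psi| = o(1)$ for $j=0,1,2$, where the shift by $2$ in the exponent accounts for the fact that $\psi$ itself carries two extra powers of $r$ compared to the metric error $k$.

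The main obstacle, and essentially the only genuine content beyond bookkeeping, is making the passage from \emph{local $C^2$ convergence of the rescaled objects} to \emph{weighted pointwise decay} fully rigorous and uniform. One must verify that the convergence on each fixed compact annulus can be turned into a statement holding uniformly in the angular variables as $r\to\infty$, and carefully track how the $g_\alpha$-covariant derivatives transform under $\rho_\lambda$ (using $\rho_\lambda^* g_\alpha = \lambda^{2\alpha}g_\alpha$, the Christoffel symbols of $g_\alpha$ scale so that each covariant derivative contributes a factor $\lambda^{-\alpha}$, i.e.\ one power of $r^{-1}$). Since $\rho_\lambda$ does not preserve compact subsets of $\C^n\setminus\{0\}$ pointwise but maps the unit annulus to annuli escaping to infinity, one works on a fixed fundamental annulus and uses the dilation-covariance to propagate the estimate; the $o(1)$ rate is inherited directly from the $C^2_{\loc}$ convergence. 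This is routine but must be stated with care, which is presumably why the corollary is asserted as a "straightforward consequence" without a written-out proof.
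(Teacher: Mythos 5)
Your proposal is correct and follows exactly the route the paper intends: the paper itself offers no written proof, asserting the corollary as a ``straightforward consequence'' of Lemmas \ref{lem:bihol-extend}, \ref{lma:centering}, and \ref{lem:solpot-asymp}, and your annulus-dilation bookkeeping (using $\rho_\lambda^* g_\alpha = \lambda^{2\alpha}g_\alpha$ and $\rho_\lambda^* r^2 = \lambda^{2\alpha}r^2$ to convert rescaled $C^2_{\loc}$ convergence into the weighted decay statements) is precisely the omitted translation. One minor mechanical correction: the factor of $r^{-1}$ gained per derivative comes from the scaling of the $g_\alpha$-norm on tensors of higher rank, not from the Christoffel symbols (the Levi-Civita connection is invariant under the constant conformal rescaling $\lambda^{2\alpha}g_\alpha$, so $\rho_\lambda$ is affine and the connection does not scale), but this does not affect your conclusion.
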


We will always choose such an $F$ in the subsequent sections.

\section{PDE's for Approximate Killing Vector Fields}\label{sec:PDES}
We first recall the following results from \cite{Chodosh:ExpandBrySol}, which are valid for any expanding soliton. Recall that $\Delta_{L}$ is the Lichnerowicz Laplacian, defined in any unitary frame $\{\eta_1, \ldots, \eta_n\}$ by
\begin{equation*}
\Delta_{L}h_{i\bar k}= \Delta h_{i\bar k} + 2 \Rm_{i\bar j \bar k l}h_{\bar l k} - \Ric_{i\bar{l}} h_{l \bar k}-\Ric_{l\bar k}h_{i\bar l}
\end{equation*}
for any $J$-invariant $(0,2)$-tensor $h$.

\begin{proposition}[{\cite[Proposition 3.1]{Chodosh:ExpandBrySol} }]\label{prop:lichPDE}
If a vector field $U$ satisfies $\Delta U + D_{X} U -\frac 12 U = 0$, then $h: = \mathscr{L}_{U}(g)$ satisfies 
\begin{equation*}
\Delta_{L}h + \sL_{X}h - h = 0.
\end{equation*}
\end{proposition}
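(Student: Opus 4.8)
The plan is to compute the Lie derivative $\mathscr{L}_X(\mathscr{L}_U g)$ and show that, when $U$ satisfies the vector-field equation $\Delta U + D_X U - \tfrac12 U = 0$, the resulting identity is exactly the Lichnerowicz PDE for $h = \mathscr{L}_U g$. The key structural fact is that on an expanding soliton with $X = \nabla f$, conjugating the Laplacian by a Lie derivative produces the Lichnerowicz Laplacian up to curvature and Ricci correction terms, and these correction terms are precisely the ones that the soliton equation $2\Ric + g = 2D^2 f$ converts into the zeroth-order terms $\mathscr{L}_X h - h$. So the strategy is: first express $\Delta_L h$ in terms of $\mathscr{L}_U(\Delta g$-type quantities$)$ plus curvature commutators, then feed in the equation for $U$.

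First I would recall the standard commutation identity relating the rough Laplacian acting on $\mathscr{L}_U g$ to $\mathscr{L}_{(\Delta U + \ldots)} g$. Concretely, for a vector field $U$ one has a Weitzenb\"ock-type formula expressing $\Delta(\mathscr{L}_U g)$ as $\mathscr{L}_{\Delta U} g$ plus terms involving $\Rm$ contracted against $\mathscr{L}_U g$ and $\Ric$ contracted against $DU$; the curvature terms are exactly what upgrades the rough Laplacian $\Delta$ to the Lichnerowicz Laplacian $\Delta_L$ defined in the excerpt. The cleanest route in the K\"ahler setting is to work in a unitary frame and use that $h = \mathscr{L}_U g$ is $J$-invariant of $(1,1)$-type (since $U$ is real holomorphic in the intended application), so that $h_{i\bar k} = D_i U_{\bar k} + D_{\bar k} U_i$, and to commute covariant derivatives using the K\"ahler curvature symmetries.

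Next I would handle the first-order and zeroth-order terms using the soliton structure. The term $D_X U = D_{\nabla f} U$ appears when I take $\mathscr{L}_U$ of the drift term, and the Hessian $D^2 f$ it generates must be replaced via the soliton equation by $-\Ric - \tfrac12 g$; this is the step that produces $\mathscr{L}_X h$ together with the linear term $-h$ and simultaneously cancels the spurious Ricci-type contractions left over from the Weitzenb\"ock step. Throughout, the hypothesis $\Delta U + D_X U - \tfrac12 U = 0$ lets me substitute $\mathscr{L}_{\Delta U} g = \mathscr{L}_{(\frac12 U - D_X U)} g$, and I would track how $\mathscr{L}_{D_X U} g$ compares to $\mathscr{L}_X(\mathscr{L}_U g) = \mathscr{L}_X h$ (they differ by a term involving $\mathscr{L}_{[X,\,\cdot\,]}$ which the soliton equation controls).

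The main obstacle will be the bookkeeping in the middle step: correctly matching the curvature contractions coming from commuting derivatives in the Weitzenb\"ock formula against the precise index pattern $2\Rm_{i\bar j\bar k l} h_{\bar l k}$ in the definition of $\Delta_L$, and verifying that the first-order Lie-derivative correction terms assemble exactly into $\mathscr{L}_X h$ rather than some other first-order operator. This is a routine but delicate tensor computation where the K\"ahler identities (parallelism of $J$, the symmetry $\Rm_{i\bar j k\bar l} = \Rm_{k\bar j i\bar l}$) must be used to collapse several intermediate terms; since the proposition is quoted from \cite{Chodosh:ExpandBrySol} and holds for any expanding soliton, I expect the computation to go through verbatim once the $J$-invariance of $h$ is used to pass to the complex formulation of $\Delta_L$ given above.
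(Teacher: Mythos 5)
Your overall strategy---commute the Laplacian past $\mathscr{L}_U$ and then substitute the soliton equation---is the same ``direct computation using the soliton equation'' that the paper (deferring to \cite{Chodosh:ExpandBrySol}) has in mind, but there is a genuine gap at exactly the step you defer to ``bookkeeping.'' The Weitzenb\"ock formula you propose to use is misstated: $\Delta(\mathscr{L}_U g) - \mathscr{L}_{\Delta U} g$ is \emph{not} just a sum of terms of the form $\Rm * \mathscr{L}_U g$ and $\Ric * DU$; commuting covariant derivatives past the Laplacian also produces terms involving first derivatives of curvature contracted against $U$ (after the second Bianchi identity, $D\Ric * U$ terms). These do not vanish in general, your plan has no mechanism to absorb them, and their cancellation is precisely what makes the proposition true. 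The clean way to organize this---valid for any vector field on any Riemannian manifold, obtained by applying diffeomorphism invariance of the Ricci tensor, $D(-2\Ric)[\mathscr{L}_U g] = -2\mathscr{L}_U\Ric$, to the linearization/DeTurck formula $D(-2\Ric)[h] = \Delta_L h - \mathscr{L}_{W(h)}g$ with $W(h) = \Div h - \frac12 \nabla \tr h$, together with the computation $W(\mathscr{L}_U g) = \Delta U + \Ric(U)$---is the identity
\begin{equation*}
\Delta_L(\mathscr{L}_U g) = \mathscr{L}_{\Delta U + \Ric(U)}\, g - 2\,\mathscr{L}_U \Ric,
\end{equation*}
in which the derivative-of-curvature terms enter through $\mathscr{L}_U\Ric$. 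Granting this, the proof closes in three lines, and one sees exactly where that term goes: the soliton equation gives $\Ric(U) = D_U X - \frac12 U$, so the hypothesis $\Delta U = \frac12 U - D_X U$ yields $\Delta U + \Ric(U) = D_U X - D_X U = [U,X]$; then
\begin{equation*}
\mathscr{L}_{[U,X]}g = \mathscr{L}_U\mathscr{L}_X g - \mathscr{L}_X\mathscr{L}_U g = \mathscr{L}_U(2\Ric+g) - \mathscr{L}_X h = 2\,\mathscr{L}_U\Ric + h - \mathscr{L}_X h,
\end{equation*}
and substituting this into the displayed identity, the $2\mathscr{L}_U\Ric$ terms cancel, leaving $\Delta_L h = h - \mathscr{L}_X h$, which is the claim. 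Without the $\mathscr{L}_U\Ric$ contribution the two sides cannot be made to match, so this is a missing idea rather than routine bookkeeping.

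Two smaller points. First, a sign error: with this paper's convention $2\Ric + g = 2D^2 f$, the Hessian must be replaced by $+\Ric + \frac12 g$, not $-\Ric - \frac12 g$; carried through, your zeroth-order terms $\mathscr{L}_X h - h$ would come out with the wrong signs. Second, the reduction to unitary frames and $J$-invariance is unnecessary and slightly misleading: the proposition is purely Riemannian (it is quoted from \cite{Chodosh:ExpandBrySol}, where no complex structure is present) and holds for arbitrary vector fields $U$; the $J$-invariance of $h$ matters only so that the paper's unitary-frame definition of $\Delta_L$ applies to $h$, which is automatic in the application because there $U$ is real holomorphic.
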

This follows by direct computation, using the soliton equation. As a corollary of this, we have
\begin{corollary}[{\cite[Corollary 3.2]{Chodosh:ExpandBrySol}}]\label{cor:lichPDEforX}
The soliton vector field $X$ satisfies $\Delta X + D_{X}X -\frac 12 X= 0$ and thus $\sL_{X}(g) = 2\Ric + g$ satisfies
\begin{equation*}
\Delta_{L}(2\Ric+g) + \mathscr{L}_{X}(2\Ric + g) -(2\Ric + g)= 0.
\end{equation*}
\end{corollary}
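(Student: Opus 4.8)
The plan is to derive everything from Proposition~\ref{prop:lichPDE} by checking its hypothesis for the single vector field $U=X$. Two observations reduce the statement to a short computation. First, since $X=\nabla f$ is a gradient field, $\mathscr{L}_{X}g$ has components $\nabla_{i}\nabla_{j}f+\nabla_{j}\nabla_{i}f=2\nabla_{i}\nabla_{j}f$, which is exactly $2\Ric+g$ by the soliton equation~\eqref{eq:KRS}; so the tensor appearing in the statement is precisely $h=\mathscr{L}_{X}g$. Second, granting the vector-field identity $\Delta X+D_{X}X-\tfrac12 X=0$, Proposition~\ref{prop:lichPDE} applied with $U=X$ immediately yields $\Delta_{L}h+\mathscr{L}_{X}h-h=0$, which is the assertion. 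Thus the entire content is the first-order-in-$X$ PDE, and the Kähler hypotheses play no role here (the result is valid for any expanding soliton).

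To prove $\Delta X+D_{X}X-\tfrac12 X=0$ I would argue by a direct tensor calculation from the soliton equation, written as $\nabla_{i}\nabla_{j}f=R_{i\bar j}+\tfrac12 g_{i\bar j}$, or in real indices $\nabla_{i}\nabla_{j}f=R_{ij}+\tfrac12 g_{ij}$. The Bochner commutation identity for a gradient gives $\Delta X_{i}=\nabla_{i}(\Delta f)+R_{ij}\nabla^{j}f$. Tracing the soliton equation yields $\Delta f=R+n$ in real dimension $2n$, so $\nabla_{i}(\Delta f)=\nabla_{i}R$; and differentiating Hamilton's identity $|\nabla f|^{2}+R=f$ (as used in the proof of Lemma~\ref{lem:solpot-asymp}), together with $\nabla_{i}|\nabla f|^{2}=2\nabla^{j}f\,\nabla_{i}\nabla_{j}f=2R_{ij}\nabla^{j}f+\nabla_{i}f$, gives the contracted identity $\nabla_{i}R=-2R_{ij}\nabla^{j}f$. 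Hence $\Delta X_{i}=-2\Ric(X)_{i}+\Ric(X)_{i}=-\Ric(X)_{i}$. On the other hand, $(D_{X}X)_{i}=\nabla^{j}f\,\nabla_{j}\nabla_{i}f=\Ric(X)_{i}+\tfrac12 X_{i}$ directly from the soliton equation. Adding these and subtracting $\tfrac12 X$ produces an exact cancellation, establishing the identity, after which Proposition~\ref{prop:lichPDE} finishes the proof.

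The computation is entirely routine, so there is no serious obstacle; the only point requiring care is the bookkeeping of curvature sign conventions and normalizations, since every term must cancel and the conclusion is sensitive to the coefficient $-2$ in $\nabla R=-2\Ric(X)$ and to the constant $\tfrac12$ in~\eqref{eq:KRS}. In particular, the sign here — opposite to the more familiar $\nabla R=+2\Ric(\nabla f)$ — is a consequence of the sign convention for the potential implicit in the soliton equation and in Hamilton's identity as normalized above, and I would take care to verify that this convention is applied consistently across the commutation formula, the trace $\Delta f=R+n$, and the differentiated Hamilton identity.
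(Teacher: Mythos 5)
Your proposal is correct and follows essentially the same route as the paper's (cited) proof: one verifies the hypothesis of Proposition~\ref{prop:lichPDE} for $U=X$ via the standard soliton identities (Bochner commutation, the trace $\Delta f = R + n$, and $\nabla R = -2\Ric(\nabla f)$, whose sign you correctly adapt to the normalization $2\Ric + g = 2D^2 f$), and then applies that proposition together with $\mathscr{L}_X g = 2D^2 f = 2\Ric + g$. Your sign bookkeeping checks out, so the argument is complete.
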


In the remainder of this section, we will prove two results which will allow us to analyze solutions to the two PDEs just discussed. We will crucially use the assumption that $M$ is biholomorphic to $\C^n$ in order to apply the classical Liouville theorem in the following lemma. This will eventually play a key role in our proof that the approximate Killing vectors obtained from the asymptotically conic assumption are actually exact Killing vectors. 

\begin{lemma}\label{lem:liouville}
Suppose $U$ is a real holomorphic vector field on $M$ which vanishes at a critical point of $f$ in $M$. If the vector field $Q := \Delta U + D_X U - \frac{1}{2}U$ satisfies $|Q|_g = o(r)$, then necessarily $Q \equiv 0$.
\end{lemma}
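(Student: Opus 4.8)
The plan is to show that $Q$ is \emph{itself} a real holomorphic vector field which vanishes at the critical point of $f$ and whose coefficients grow sublinearly in the flat coordinates on $\C^n$; the classical Liouville theorem (available precisely because $M$ is biholomorphic to $\C^n$ by Lemma \ref{lem:bihol-extend}) then forces $Q \equiv 0$.

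First I would record the Bochner-type identity for real holomorphic vector fields. Writing $W = U^{1,0}$ for the $(1,0)$-part, the condition that $U$ be real holomorphic is $D_{\bar j}W^i = 0$ in holomorphic coordinates, since the mixed Christoffel symbols vanish on a K\"ahler manifold. Commuting covariant derivatives, the rough Laplacian $\Delta W^i$ (as in Proposition \ref{prop:lichPDE}) becomes \emph{algebraic} in $W$, equal to a Ricci contraction $\pm\Ric^i_l W^l$; in particular $\Delta U$ vanishes wherever $U$ does. Consequently the $(1,0)$-part of $Q$ is $(Q^{1,0})^i = \pm\Ric^i_l W^l + X^a D_a W^i - \tfrac12 W^i$, where the antiholomorphic part of $D_X U$ has dropped out because $D_{\bar a}W^i = 0$.

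The heart of the argument is to verify that $Q$ is again real holomorphic, i.e.\ $D_{\bar m}(Q^{1,0})^i = 0$. Differentiating the three terms above and commuting derivatives produces curvature terms, and here I would use \emph{both} parts of the soliton equation \eqref{eq:KRS}. The relation $2\Ric + g = 2D^2 f$ rewrites the $\bar\partial$-derivative of $\Ric$ as a third covariant derivative of $f$, while the vanishing of the $(2,0)$-Hessian $D_iD_jf = 0$ (equivalently $D_{\bar m}D_{\bar p}f = 0$) kills the otherwise uncontrolled third-derivative contribution, leaving only a curvature term; the K\"ahler symmetries $R_{i\bar j k\bar l} = R_{k\bar j i\bar l}$ then make this cancel exactly against the curvature produced by differentiating $X^a D_a W^i$. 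I expect this cancellation to be the main obstacle: it is the one genuinely computational step, and it is where the special structure of a K\"ahler--\emph{Ricci soliton} (as opposed to a general K\"ahler manifold or a general soliton) is essential.

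Granting that $Q$ is real holomorphic, the remaining steps are short. Since $U$ and $X = \nabla f$ both vanish at the unique critical point $p = F(0)$ of $f$, every term in $(Q^{1,0})^i$ vanishes there, so $Q(p) = 0$. Pulling back by the biholomorphism $F$ of Corollary \ref{coro:unwrap-conical}, the coefficients $q^i := (F^*Q)^i$ are entire holomorphic functions on $\C^n$ vanishing at the origin. I would then convert the hypothesis $|Q|_g = o(r)$ into sublinear growth of the $q^i$: because $F^*g$ is uniformly equivalent to $g_\alpha$ (Corollary \ref{coro:unwrap-conical}) and the smallest eigenvalue of $(g_\alpha)_{i\bar j}$ is $\alpha^2|z|^{2(\alpha-1)}$, one has $|Q|_g^2 \gtrsim |z|^{2(\alpha-1)}\sum_i|q^i|^2$ for large $|z|$, so that $|Q|_g = o(r) = o(|z|^\alpha)$ gives $\sum_i|q^i|^2 = o(|z|^2)$, i.e.\ $|q^i| = o(|z|)$. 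The generalized Liouville theorem then forces each $q^i$ to be constant, and since $q^i(0) = 0$ we conclude $q^i \equiv 0$, hence $Q \equiv 0$.
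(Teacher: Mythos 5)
Your proposal is correct, and its overall architecture coincides with the paper's: show that the coefficients $q^i$ of $Q^{1,0}$ in the coordinates supplied by $F$ are entire holomorphic functions, convert $|Q|_g = o(r) = o(|z|^{\alpha})$ into the sublinear growth $|q^i| = o(|z|)$ via the eigenvalues $\alpha^2|z|^{2(\alpha-1)}, \alpha|z|^{2(\alpha-1)}, \ldots$ of $g_\alpha$ together with the uniform equivalence of $F^*g$ and $g_\alpha$, apply Liouville to get constants, and kill the constants at the common zero of $U$ and $X$. The one place you genuinely diverge is how holomorphicity of $Q^{1,0}$ is established. You propose a direct verification that $D_{\bar m}(Q^{1,0})^i = 0$: starting from the Bochner-reduced expression $(Q^{1,0})^i = -\Ric^i{}_l W^l + X^a D_a W^i - \tfrac{1}{2}W^i$, you differentiate, use $2\Ric + g = 2D^2 f$ to write $D_{\bar m}R_{l\bar j} = D_{\bar m}D_l D_{\bar j}f$, use $D_{\bar m}D_{\bar j}f = 0$ (the conjugate of $D_iD_jf = 0$) to reduce this to a curvature term contracted with $\nabla f$, and cancel that against the commutator term $X^a\left[D_{\bar m},D_a\right]W^i$ using the K\"ahler symmetry $R_{i\bar j k \bar l} = R_{k \bar j i \bar l}$; this computation does close up, so your route is valid. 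The paper sidesteps the computation entirely: applying the soliton equation once at the level of vector fields (via $D_U X = \Ric(U) + \tfrac{1}{2}U$) it rewrites $Q = \Delta U + \Ric(U) - [X,U]$, and then the same Bochner identity you invoke, $\Delta U^{1,0} + \Ric(U^{1,0}) = 0$, leaves $Q^{1,0} = [U^{1,0},X^{1,0}]$ --- a Lie bracket of two holomorphic vector fields, hence holomorphic with no further differentiation, and manifestly vanishing at $p$. Besides brevity, the bracket identity buys something downstream: in the proof of Theorem \ref{thm:main}, the conclusion $Q \equiv 0$ is immediately read off as $[\widetilde{U_a},X]^{1,0} = 0$, i.e.\ the recovered Killing fields commute with the soliton field, which is part of the $U(n)$-symmetry conclusion; with your formulation that commutation statement would require an extra step to extract.
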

\begin{proof}
It is not hard to see from the soliton equation \eqref{eq:KRS} that 
\begin{equation*}
Q = \Delta U + \Ric(U) - [X,U]
\end{equation*}
where $X=\nabla f$ is the soliton vector field (cf.\ \cite[Section 3]{Chodosh:ExpandBrySol}). Here $\Ric$ is regarded as an endomorphism on $TM$. Furthermore, using the fact that $X$ and $U$ are real holomorphic it is not hard to check that $[X,U]^{1,0} = [X^{1,0},U^{1,0}]$. Thus, by the K\"ahler condition (i.e. $J$ is parallel) we see that
\begin{equation*}
Q^{1,0} = \Delta U^{1,0} + \Ric(U^{1,0}) -[X^{1,0},U^{1,0}].
\end{equation*}
On the other hand, because $U$ is real holomorphic, a simple computation in local coordinates gives that 
\begin{align*}
\Delta U^{1,0} & = g^{l\bar{j}} (D_l D_{\bar{j}} + D_{\bar{j}} D_l ) \left(U^k \dd{z^{k}}\right)\\
& = g^{l\bar{j}} \dd{\bar{z}^{j}} \left(U^k \Gamma_{lk}^i \dd{z_i}\right) = - g^{l\bar{j}} \Ric_{k\bar{j}} U^k \dd{z^{l}}
\end{align*}
and therefore,
\begin{equation*}
\Delta U^{1,0} + \Ric(U^{1,0}) = 0.
\end{equation*}
As such, we see that $Q^{1,0} = [U^{1,0},X^{1,0}]$. In particular, this shows that if we write $F^*Q^{1,0} = Q^{k}\frac{\partial}{\partial z^{k}}$ (where $z^{k}$ are the coordinates fixed by Definition \ref{defi:asym-cone}) then the coefficients $Q^{k}$ are holomorphic functions on $\C^{n}$. We claim that the $Q^{k}$ have sublinear growth as holomorphic functions on $\C^{n}$. To see this, recall that the eigenvalues of $g_{\alpha}$ with respect to the standard $\{z^{k}\}$-coordinates are given by 
\begin{equation*}
\alpha^2|z|^{2(\alpha-1)}, \alpha|z|^{2(\alpha-1)}, \ldots, \alpha|z|^{2(\alpha-1)}.
\end{equation*}
Therefore, we have
\begin{align*}
\frac{1}{C} |z|^{2(\alpha-1)} |Q^k|^2 & \leq \frac{1}{C}|z|^{2(\alpha-1)}\sum_{j=1}^n |Q^j|^2\\
& \leq |(g_\alpha)_{i\bar{j}} Q^i Q^{\bar{j}}|\\
& = |Q|^2_{g_{\alpha}} = o(r^2) = o(|z|^{2\alpha})
\end{align*}
and so for each $k$, we have $|Q^k| \leq o(|z|)$. In other words, $Q^k$ has sublinear growth as a holomorphic function on $\C^{n}$. Thus, the classical Liouville's theorem implies $Q^k$ are constants for any $k$. Finally, we would like to conclude they vanish identically. To do so, it is enough to show that $Q^{k} = 0$ at some point in $\C^{n}$. However, this follows easily from the formula $Q^{1,0}=[U^{1,0},X^{1,0}]$ and the assumption that $X$ and $U$ both vanish at a common point in $M$, namely a critical point of $f$.
\end{proof}

The next proposition is the K\"ahler analogue of \cite[Proposition 5.1]{Chodosh:ExpandBrySol}. We use the weaker assumption of positive bisectional curvature (as opposed to positive sectional curvature, as used in \cite[Proposition 5.1]{Chodosh:ExpandBrySol}) in order to apply a barrier argument to the Lichnerowicz PDE.  This proposition will allow us to conclude that the approximate Killing vectors represent actual symmetries.

\begin{proposition}\label{prop:LichPDEbarrier}
Suppose $h$ is a $J$-invariant $(0,2)$-tensor on $M$ which satisfies
$$\Delta_L h + \mathscr{L}_X h - h = 0$$
and $|h| = o(1)$ as $r \to \infty$. Then $h \equiv 0$.
\end{proposition}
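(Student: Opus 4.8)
The plan is to run a maximum-principle argument using $\psi := 2\Ric + g$ as a strictly positive barrier. By Corollary~\ref{cor:lichPDEforX}, $\psi$ solves the same equation $\Delta_L \psi + \mathscr{L}_X \psi - \psi = 0$ as $h$, and since positive holomorphic bisectional curvature forces $\Ric > 0$, we have $\psi \geq g > 0$; in particular $\psi$ is bounded below by $g$ and is nowhere small. I would measure $h$ against $\psi$: let $\mu(x)$ be the largest eigenvalue of $h$ relative to $\psi$ at $x$, i.e.\ the largest real number with $h(V,\bar V) = \mu\,\psi(V,\bar V)$ for some nonzero $V \in T^{1,0}_x M$. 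Then $\mu$ is continuous, and since $\psi \geq g$ one checks $\mu(x) \leq |h|_g(x) = o(1)$, so $\mu \to 0$ as $r \to \infty$. The goal is to prove $\sup_M \mu \leq 0$; applying the same conclusion to $-h$ (which solves the same linear equation) then gives the reverse inequality, whence $h \equiv 0$.

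So suppose toward a contradiction that $\lambda_0 := \sup_M \mu > 0$. Because $\mu \to 0$ at infinity, this supremum is attained at some $x_0 \in M$, and the Hermitian tensor $w := \lambda_0 \psi - h$ is everywhere positive semidefinite, solves $\Delta_L w + \mathscr{L}_X w - w = 0$ by linearity, and has a null direction $V_0 \in T^{1,0}_{x_0} M$ (so that $w(V_0, \cdot) = w(\cdot, \bar V_0) = 0$). Extending $V_0$ by parallel transport along geodesics from $x_0$, the scalar $\eta := w(V, \bar V) \geq 0$ attains an interior minimum value $0$ at $x_0$, so $\nabla \eta(x_0) = 0$ and $\Delta \eta(x_0) \geq 0$.

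The heart of the argument is to evaluate the equation at $x_0$ contracted into $V_0$. Because $V_0$ lies in the kernel of the positive-semidefinite form $w$, the Lie-derivative term, both Ricci terms, and the zeroth-order term $w(V_0, \bar V_0)$ all drop out at $x_0$ (using $\nabla \eta(x_0) = 0$ for the drift part of $\mathscr{L}_X$). Choosing a unitary frame $\{e_a\}$ diagonalizing $w$ at $x_0$, with eigenvalues $w_a \geq 0$ and $V_0 = e_1$, $w_1 = 0$, the Lichnerowicz equation collapses to an identity of the form
\[
0 \leq \Delta \eta(x_0) = \Delta w(V_0, \bar V_0) = -2\sum_a \Rm(e_1, \bar e_1, e_a, \bar e_a)\, w_a .
\]
This is precisely where positivity of holomorphic bisectional curvature is used: each coefficient $\Rm(e_1, \bar e_1, e_a, \bar e_a) > 0$ and each $w_a \geq 0$, so the right-hand side is $\leq 0$; hence it vanishes and $w_a = 0$ for every $a$, i.e.\ $w(x_0) = 0$. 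I expect this curvature-sign computation to be the main obstacle, both in carefully verifying that every other term of $\Delta_L$ and $\mathscr{L}_X$ drops out at the touching point and in recognizing that the weaker hypothesis of positive bisectional curvature (rather than the positive sectional curvature used in the real case of \cite[Proposition~5.1]{Chodosh:ExpandBrySol}) still delivers the correct sign.

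Finally, I would upgrade this pointwise degeneracy to $w \equiv 0$. The computation above verifies the null-eigenvector condition needed for a strong maximum principle for the nonnegative tensor $w$ (in the spirit of Bony's maximum principle, as used by Brendle); since $w$ vanishes entirely at $x_0$, the kernel of $w$ must propagate and $w \equiv 0$ on the connected manifold $M$. But $w \equiv 0$ means $h = \lambda_0 \psi$, and as $\psi \geq g$ the norm $|\psi|_g$ is bounded below by a positive constant, so $|h|_g \geq \lambda_0 |\psi|_g$ is bounded away from $0$, contradicting $|h| = o(1)$. Therefore $\sup_M \mu \leq 0$, so $h \leq 0$ as a Hermitian form; the identical argument applied to $-h$ yields $h \geq 0$, and hence $h \equiv 0$.
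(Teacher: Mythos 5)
Your proof is correct and follows essentially the same path as the paper's: the same barrier $2\Ric+g$, the same touching-point and null-direction argument (your $\lambda_0$ is exactly the paper's $\theta_0$), and the same use of positive holomorphic bisectional curvature to force the curvature term to vanish at the touching point, yielding $w(x_0)=0$. The only real difference is the final propagation step: where you invoke a Bony/Brendle-type strong maximum principle for nonnegative tensors as a black box, the paper argues explicitly by summing the collapsed equation over a unitary frame to obtain the scalar inequality $\Delta(\Tr\, w) + D_X(\Tr\, w) \leq 0$ (using $\Ric>0$ and $w\geq 0$) and then applying Hopf's strong maximum principle to $\Tr\, w$.
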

\begin{proof}
Since $2\Ric + g \geq g$, one can find $\theta$ large enough such that $\theta(2\Ric + g) \geq h$. Now define
$$\theta_0 = \inf\{\theta \in [0, \infty): \theta(2\Ric + g) - h \geq 0\}.$$
By K\"ahlerity of the metric and the assumptions in the lemma, note that $w := \theta_0 (2\Ric+g) - h$ is $J$-invariant. 

We will show that $\theta_0 > 0$ leads to a contradiction. If $\theta_0 > 0$, by the choice of $\theta_{0}$ and the fact that $|h|=o(1)$, there exists $p \in M$ and real vector $e_1 \in T_p M$ such that $w(e_1, e_1) = 0$ at $p$. Parallel translating $e_1$ in a neighborhood of $p$, we see that the function $w(e_1, e_1)$ has a local minimum at $p$ so $(\Delta w)(e_1, e_1) \geq 0$ and $(D_X w)(e_1, e_1) = 0$ at $p$.

Now we complexify the tangent bundle, and define the (complex) tangent vector field
\begin{equation*}\eta_1 = \frac{1}{2}(e_1 - \sqrt{-1}Je_1)\end{equation*}
which is the $(1,0)$-part of $e_1$. One may easily check that
\begin{align*}
\mathcal{R}: T_p^{1,0}M \times T_p^{1,0}M & \to \C\\
(U, V) & \mapsto \Rm(\eta_1, \overline{U}, \overline{\eta}_1, V)
\end{align*}
is self-adjoint, since
$$\overline{\mathcal{R}(U, V)} = \overline{\Rm(\eta_1, \overline{U}, \overline{\eta}_1, V)} = \Rm(\overline{\eta}_1, U, \eta_1, \overline{V}) = \Rm(\eta_1, \overline{V}, \overline{\eta}_1, U) = \mathcal{R}(V,U).$$
Choose a unitary basis $\{\eta_1, \eta_2, \ldots, \eta_n\}$ for $T^{1,0}_pM$ such that $\Rm(\eta_1, \overline{\eta}_j, \overline{\eta}_1, \eta_k)$ is a diagonal matrix at $p$. We thus have that $\Rm(\eta_1, \overline{\eta}_j, \overline{\eta}_1, \eta_k) = \mu_{j} \delta_{jk}$. Clearly, by positive bisectional curvature, $\mu_j = \Rm(\eta_1, \overline{\eta}_j, \overline{\eta}_1, \eta_j) > 0$ for all $j$.

Evaluating $\Delta_L w + \mathscr{L}_X w - w = 0$ at $p$ and in the $(\eta_i, \overline{\eta}_i)$ direction for any $i \in \{1, \ldots, n\}$ gives
\begin{align*}
0 & = (\Delta w)(\eta_i, \overline{\eta}_i) + 2\sum_{k,l} \Rm(\eta_i, \overline{\eta}_k, \overline{\eta}_i, \eta_l) w(\eta_k, \overline{\eta}_l) - 2 w(\Ric(\eta_i), \overline{\eta}_i)\\
& \quad + (\mathscr{L}_X w) (\eta_i, \overline{\eta}_i) -  w(\eta_i, \overline{\eta}_i)
\end{align*}
Note that
\begin{align*}
(\mathscr{L}_Xw) (\eta_i, \overline{\eta}_i) & = (D_X w) (\eta_i, \overline{\eta}_i) + w(D_{\eta_i} X, \overline{\eta}_i) + w(\eta_i, D_{\overline{\eta}_i}X)\\
& = (D_X w)(\eta_i, \overline{\eta}_i) + 2w(D_{\eta_i}X, \overline{\eta}_i).
\end{align*}
Furthermore, by the soliton equation, we have $2\Ric(\eta_i) = 2 D_{\eta_i}X  - \eta_i$.

Thus, we see that 
\begin{equation}\label{eq:i_ibar}
0 = (\Delta w)(\eta_i, \overline{\eta}_i) + 2\sum_{k,l}\Rm(\eta_i, \overline{\eta}_k, \overline{\eta}_i, \eta_l) w(\eta_k, \overline{\eta}_l) + (D_X w)(\eta_i, \overline{\eta}_i).
\end{equation}
At $p$, taking $i = 1$, we have
$$0 = (\Delta w)(\eta_1, \overline{\eta}_1) + 2\sum_k \mu_k w(\eta_k, \overline{\eta}_k) + 0 \geq  2\sum_k \mu_k w(\eta_k, \overline{\eta}_k).$$
Here we have used the fact that $\Delta w$ and $D_X w$ are $J$-invariant, so that $$(\Delta w)(\eta_1, \overline{\eta}_1) = \frac{1}{4} \left\{(\Delta w)(e_1, e_1) + (\Delta w)(Je_1, Je_1)\right\} \geq 0$$ and similarly for $D_X w$.

Since $\mu_k > 0$ by the positivity of bisectional curvature and $w \geq 0$, we must have $w(\eta_k, \overline{\eta}_k) = 0$ for all $k$ at $p$. Thus, summing \eqref{eq:i_ibar} over $i$, we have
\begin{align*}
\Delta(\Tr \, w) + 2\sum_{k,l}\Ric(\overline{\eta}_k, \eta_l)w(\eta_k, \overline{\eta}_l) + D_X (\Tr \, w) & = 0,\\
 \Delta(\Tr \, w) + D_X (\Tr \, w) = - 2\sum_{k,l}\Ric(\overline{\eta}_k, \eta_l)w(\eta_k, \overline{\eta}_l) & \leq 0.
\end{align*}
by the positivity of the Ricci curvature combined with the non-negativity of $w$.

However, $\Tr \, w = 0$ at $p$ and $\Tr \, w \geq 0$ near $p$ implies $\Tr \, w$ attains an interior local minimum at $p$. By Hopf's strong maximum principle it implies $\Tr \, w \equiv 0$ and hence $w \equiv 0$. However, $\theta_0 (2\Ric + g) \equiv h$ violates the asymptotics of $h$, by positivity of the Ricci curvature. This shows $\theta_ 0 = 0$ and hence $h \leq 0$. Applying the same argument to $-h$ shows that $h \equiv 0$.
\end{proof}

\section{Proof of the Main Result} \label{sec:proof-main-res}
We may now combine the above results to give a proof of Theorem \ref{thm:main}.
\begin{proof}[Proof of Theorem \ref{thm:main}]
Because the K\"ahler cone metric $g_\alpha$ is $U(n)$-rotationally symmetric, we may pick a basis for the corresponding Killing vector fields on $\C^{n}$, which we denote $\{U_a\}_{a=1}^{n^2}$. We note that $[r\dd{r}, U_a] = 0$ for all $a$. This is true because $J(r\dd{r})$ is the Reeb Killing vector field associated to the action $\textup{diag}(e^{\sqrt{-1}\theta},\ldots,e^{\sqrt{-1}\theta})$ in the $U(n)$-group. Clearly this diagonal action commutes with all other $U(n)$-actions, and hence $[J(r\dd{r}), U_a] = 0$. Moreover, the $U_a$'s are real holomorphic hence $[JV, U_a] = J[V, U_a]$ for any vector field $V$. This shows $\left[r\dd{r}, U_a\right] = 0$ for any $a$. We further note that $\left|U_{a} \right|_{g_{\alpha}}= O(r)$

We will now show that these $U_a$ push forward under the biholomorphism $F:\C^{n}\to M$ to Killing vector fields for the soliton metric $g$ (we will use the ``centered'' biholomorphism $F$ constructed in Lemma \ref{lma:centering}). As $F$ is a biholomorphism, $F_* U_a$ is a real holomorphic vector field in $M$ for each $a$. By the asymptotically conical assumption, we see that
\begin{equation*}
|\mathscr{L}_{U_{a}} F^{*}g| = |\mathscr{L}_{U_{a}} k| = o(1)
\end{equation*}
and similarly
\begin{equation*}
\left| \Div \left( \mathscr{L}_{U_{a}}(F^{*}g) \right) -\frac 12 \nabla_{F^{*}g} \left(\tr_{F^{*}g}\  \mathscr{L}_{U_{a}}(F^{*}g)\right)\right| =o(1).
\end{equation*}
Thus, on $M$, this yields $|\mathscr{L}_{F_*U_{a}}(g)| = o(1)$ and 
\begin{equation*}
\left| \Div \left( \mathscr{L}_{F_*U_{a}} g \right) -\frac 12 \nabla \left(\tr\  \mathscr{L}_{ F_*U_{a}}g\right)\right| =o(1).
\end{equation*}
Finally, (2) in Lemma \ref{lma:centering} shows that $F^*X = \frac r 2 \frac{\partial}{\partial r} + Y$ where $Y$ is a vector field so that $|Y| = o(r)$ and $|\nabla Y| = o(1)$. Thus, because the $U_{a}$ were chosen so that their Lie bracket with any purely radial vector field vanishes, we have that $[U_{a},F^*X] = [U_{a},Y] = o(r)$ (using the fact that $|U_{a}| + |\nabla U_{a}|= O(r)$ and the asymptotics of $Y$). 

For simplicity, we denote $\widetilde{U_a} := F_* U_a$. Now, by the computation in \cite[Section 3]{Chodosh:ExpandBrySol}, the above results combine to show that the $\widetilde{U_{a}}$'s satisfy 
\begin{equation*}
\left| \Delta \widetilde{U_{a}} + D_{X} \widetilde{U_{a}} -\frac 12 \widetilde{U_{a}}\right| = o(r). 
\end{equation*}
We may thus apply Lemma \ref{lem:liouville} (using the fact that the $\widetilde{U_{a}}$ clearly vanish at $F(0) \in M$ by how we chose the $U_{a}$ and how we refine $F$ in Lemma \ref{lma:centering}) to conclude that in fact 
\begin{equation*}
 \Delta \widetilde{U_{a}} + D_{X} \widetilde{U_{a}} -\frac 12 \widetilde{U_{a}} = 0.
\end{equation*}
As such, by Proposition \ref{prop:lichPDE}, we see that $h^{(a)}:= \mathscr{L}_{\widetilde{U_{a}}}(g)$ satisfies 
\begin{equation*}
\Delta_{L} h^{(a)} + \mathscr{L}_{X}(h^{(a)}) - h^{(a)}=0.
\end{equation*}
Because we have arranged that $|h^{(a)}| = o(1)$ above and by the fact that $h_a$ is $J$-invariant (since $\widetilde{U_a}$ is real holomorphic), we may thus apply Proposition \ref{prop:LichPDEbarrier} to conclude that $h^{(a)}\equiv 0$. Thus, the $\widetilde{U_{a}}$ are exact Killing vectors for $g$. Finally, we observe that by the computation in Lemma \ref{lem:liouville} we have that $[\widetilde{U_a},X]^{1,0} = 0$. Since $\widetilde{U_a}$ and $X$ are real vector fields, we must have $[\widetilde{U_a},X] = 0$. Thus we have shown that $(M,g,f)$ is $U(n)$-rotationally symmetric, as desired. 
\end{proof}

\section{Further discussion}\label{sec:shrink}
We would like to conclude by briefly discussing rotational symmetry of shrinking K\"ahler-Ricci solitons. A shrinking gradient soliton $(M, g, f)$ is defined by the equation $2\Ric - g = D^2 f$. Rotationally symmetric shrinking K\"ahler-Ricci solitons are constructed in \cite{FeldmanIlmanenKnopf} on the total space of $\mathcal{O}(-k)$-bundles over $\CP^{n-1}$ and they are asymptotically conic at infinity. However, in contrast to the conical expanders constructed in \cite{Cao:KRFsolitons} on $\C^n$, the asymptotic cone angle of these shrinkers is very rigid. In fact, for each pair $(n,k)$, there is at most one $U(n)$-invariant shrinker on $\mathcal{O}(-k)$; as such, the asymptotic cone angle is completely determined by the choice of $(n,k)$. These authors also show that $U(n)$-invariant shrinkers do not exist on $\C^n$. 

We additionally remark that the approach used to establish Theorem \ref{thm:main} does \emph{not} seem to carry over, \textit{mutatis mutandis}, to show a similar result in the shrinking case. In particular, positivity of bisectional curvature is used in a crucial way in the proof of Proposition \ref{prop:LichPDEbarrier}, while by a result of Ni \cite{Ni:AncientSol}, any shrinking K\"ahler-Ricci solitons with positive bisectional curvature must be compact. To the best of authors' knowledge, it is an open problem whether complete asymptotically conical shrinkers must be rotationally symmetric in either the Riemannian or K\"ahler setting. 
\bibliography{bib} 
\bibliographystyle{amsalpha}
\end{document}